\theoremstyle{plain}
\newtheorem{theorem}{Theorem}[section]
\newtheorem{prop}[theorem]{Proposition}
\newtheorem{cor}[theorem]{Corollary}
\renewcommand{\b}{\begin{equation}}
\newcommand{\e}{\end{equation}}
\newcommand{\g}{\mathfrak{g}}
\newcommand\C{{\mathbb C}}
\newcommand\R{{\mathbb R}}
\title{On the pluriclosed flow on Oeljeklaus-Toma manifolds}
\thanks{This work was supported by GNSAGA of INdAM}
\subjclass[2020]{53E30, 53C55}
\address{Dipartimento di Matematica G. Peano \\ Universit\`a di Torino\\
Via Carlo Alberto 10\\
10123 Torino\\ Italy}
\email{elia.fusi@unito.it}
\email{luigi.vezzoni@unito.it}
\author{Elia Fusi and Luigi Vezzoni}
\date{\today}
\begin{document}
\maketitle
\begin{abstract}
We investigate the pluriclosed flow on Oeljeklaus-Toma manifolds. We parametrize left-invariant   pluriclosed metrics on  Oeljeklaus-Toma manifolds and we classify the ones  which lift to an algebraic soliton of the pluriclosed flow on the universal covering. We further show that the pluriclosed flow  starting from a left-invariant pluriclosed metric has a long-time solution $\omega_t$ which once normalized collapses to a torus in the Gromov-Hausdorff sense. Moreover the lift of $\tfrac{1}{1+t}\omega_t$ to the universal covering of the manifold converges in the Cheeger-Gromov sense to $(\mathbb H^s\times\mathbb C^s, \tilde{\omega}_{\infty})$  where $\tilde{\omega}_{\infty}$  is an algebraic soliton.
\end{abstract}

\section{Introduction}

 Oeljeklaus-Toma manifolds are a very interesting class of complex manifolds introduced and firstly studied in \cite{OT}. 
These manifolds are defined as compact quotients of the type 
$$
M=\frac{\mathbb H^r\times\mathbb C^s}{U \ltimes \mathcal O_{\mathbb{K}}}
$$
where $\mathbb{H}\subseteq \C$ is the upper half-plane, $\mathcal O_{\mathbb{K}}$ is the ring of algebraic integers of an algebraic extension $\mathbb K$ of $\mathbb Q$ satisfying 
$[\mathbb K : \mathbb Q]=r+2s$ and $U$ is a free subgroup of rank $r$ of $\mathcal O^{*,+}_{\mathbb{K}}$ 
satisfying some compatible conditions.  The action of $U \ltimes \mathcal O_{\mathbb{K}}$ on $\mathbb H^r\times\mathbb C^s$ is defined via some embeddings of $\mathbb K$ in $\mathbb R$ and $\mathbb C$. Oeljeklaus-Toma manifolds have a rich geometric structure. For instance, they have a natural structure of $\mathbb T^{r+2s}$-torus bundle over a $\mathbb T^r$ and a structure of solvmanifold \cite{Kas}, i.e. they are always compact quotients of a solvable Lie group by a lattice.  
The Poincar\'e metric\footnote{In the whole paper we identify a Hermitian metric with its fundamental form.} $\omega_{\mathbb H^r}=\sqrt{-1}\sum_{a=1}^r\frac{dz_a\wedge d\bar z_a}{4(\Im\mathfrak m z_a)^2}$ induces a degenerate metric $\omega_\infty$ on $M$ which has a central role in the study of geometric flows on these manifolds.
The pair $(r,s)$ is called the {\em type} of the manifold.  
The case of type $(r,s)=(1,1)$ corresponds to the Inoue-Bombieri surfaces \cite{In}.  
 
\medskip 
In \cite{AT,FTWZ, TWCom,Zheng}  the Chern-Ricci flow \cite{Gill,TWJDG} on Oeljeklaus-Toma manifolds $M$ of type $(r,1)$ is studied.  Accordingly to the results in  \cite{AT,FTWZ,TWCom,Zheng}, under some assumptions on the initial Hermitian metric, the flow has a long-time solution $\omega_t$ such that $(M,\frac{\omega_t}{1+t})$ converges in the Gromov-Hausdorff sense to an $r$-dimensional torus $\mathbb T^r$ as $t\to \infty$. The result can be adapted 
to Oeljeklaus-Toma manifolds of arbitrary type by assuming the initial metric to be left-invariant with respect to the structure of solvmanifold. 
Moreover, a result of Lauret in \cite{lauret} allows us to give a characterization of left-invariant Hermitian metrics on an Oeljeklaus-Toma  manifold which lift to an algebraic soliton of the Chern-Ricci flow on the universal covering of the manifold (see Proposition \ref{main1} in the present paper).   

\medskip 
Following the same approach, we focus on the pluriclosed flow on Oeljeklaus-Toma manifolds when the initial pluriclosed Hermitian metric is left-invariant. The pluriclosed flow is a geometric flow of pluriclosed metrics, i.e. of Hermitian metrics having the fundamental form $\partial \bar\partial$-closed, 
introduced by Streets and Tian in \cite{streets-tian2}. The flow belongs to the family of  the Hermitian curvature flows \cite{streets-tian} and evolves an initial pluriclosed metric along the $(1,1)$-component of the Bismut-Ricci form. Namely, on a Hermitian manifold $(M,\omega)$ 
there always exists a unique metric connection $\nabla^B$, called the {\em Bismut connection},  preserving the complex structure and such that 
$$
\omega(T^B(\cdot,\cdot),J\cdot)\quad \mbox{is a $3$-form}\,,
$$  
where $T^B$ is the torsion of $\nabla^B$. The {\em Bismut-Ricci form} of $\omega$ is then defined as 
$$
\rho_B(X,Y):=\sqrt{-1}\sum_{i=1}^n R_B(X,Y,X_i,\bar X_i)\,,
$$
where $R_B$ is the curvature tensor of $\nabla^B$ and $\{X_i\}$ is a unitary frame of $\omega$. $\rho_B$ is always a closed real form.   
Given  a pluriclosed Hermitian metric $\omega$ on $M$, the {\em pluriclosed flow} is then defined as the geometric flow of pluriclosed metrics governed by the equation 
$$
\partial_t\omega_t=-\rho_B^{1,1}(\omega_t)\,,\quad \omega_{|t=0}=\omega\,.
$$

The pluriclosed  flow was  deeply studied in literature, see for instance \cite{AL, boling, EFV2, mario, JS,  PV, streets-tian4, Streets3, Streets1, Streets4, Streets2, Streets} and the references therein.

\medskip 
Our main result is the following

\begin{theorem}\label{main2}
Let $\omega$  be a left-invariant pluriclosed Hermitian metric on an Oeljeklaus-Toma manifold $M$. Then the pluriclosed flow starting from $\omega$ has a long-time solution $\omega_t$ such that $(M,
\frac{\omega_t}{1+t})$ converges in the Gromov-Hausdorff sense to $(\mathbb T^s,d)$. Moreover, $\omega$ lifts to an expanding algebraic soliton on the universal covering of $M$ if and only if it is diagonal and  the first $s$ diagonal components coincide. Finally, $(\mathbb H^s\times\mathbb C^s, \frac{\omega_t}{1+t})$ converges in the Cheeger-Gromov sense to $(\mathbb H^s\times\mathbb C^s, \tilde{\omega}_{\infty})$  where $\tilde{\omega}_{\infty}$  is an algebraic soliton.
\end{theorem}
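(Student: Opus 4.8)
The plan is to reduce the flow to a finite-dimensional ODE and then analyze its long-time behaviour, following the strategy used for the Chern-Ricci flow in Proposition \ref{main1}. Since the Bismut-Ricci form of a left-invariant metric is again left-invariant, the pluriclosed flow preserves the space of left-invariant pluriclosed metrics, which by the parametrization established above is a finite-dimensional convex set; the flow therefore becomes an ODE for the coefficients of $\omega_t$ in a fixed left-invariant coframe. Writing the Oeljeklaus-Toma Lie algebra as $\mathfrak g=\mathfrak a\ltimes\mathfrak n$, with $\mathfrak a$ the $s$-dimensional abelian subalgebra generated by the dilations of the $\mathbb H^s$-factors and $\mathfrak n$ the abelian ideal of translations, the first and central task is to compute $\rho_B^{1,1}(\omega)$ explicitly in terms of the coefficients of $\omega$, the structure constants of $\mathfrak g$, and the pluriclosed constraint. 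I expect the outcome to be negative semidefinite, with the $-\rho_B^{1,1}$-flow stabilizing along $\mathfrak a$ and contracting transversally, so that $\omega_t$ exists for all $t\ge 0$ and grows linearly in $t$ precisely along the $s$ directions of $\mathfrak a$.

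Granting this, the first assertion follows by normalizing by $1+t$. As in the Chern-Ricci case, the limiting behaviour is governed by the degenerate metric $\omega_\infty$: after rescaling, the $s$ surviving $\mathfrak a$-directions carry a flat metric on the base $\mathbb T^s$ of the torus fibration $M\to\mathbb T^s$, while the $3s$ fibre directions (the $x_a$- and $w_j$-translations) collapse at a controlled rate. Applying the collapsing techniques of \cite{AT,FTWZ,TWCom} to bound the fibre diameters by the decaying coefficients, one obtains $(M,\tfrac{1}{1+t}\omega_t)\to(\mathbb T^s,d)$.

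For the soliton characterization I would introduce the Bismut-Ricci endomorphism $P$ determined by $\rho_B^{1,1}=\omega(P\,\cdot\,,\,\cdot\,)$ and use, as in Proposition \ref{main1} and \cite{lauret}, that $\omega$ lifts to an expanding algebraic soliton if and only if $P=c\,\mathrm{Id}+D$ for some $c\in\mathbb R$ (the sign of $c$ detecting the expanding case) and some derivation $D$ of $\mathfrak g$. Substituting the expression for $P$ obtained in the first step and using the explicit description of $\mathrm{Der}(\mathfrak g)$ for the solvable algebra $\mathfrak a\ltimes\mathfrak n$ with abelian ideal $\mathfrak n$, the soliton equation should force the off-diagonal coefficients of $\omega$ to vanish and the $s$ coefficients of the $\mathbb H^s$-block to coincide; conversely, a direct check shows that every such diagonal metric solves it, giving the stated equivalence.

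Finally, for the Cheeger-Gromov statement I would pass to Lauret's bracket flow: on the universal cover the normalized flow $\tfrac{1}{1+t}\omega_t$ corresponds to a normalized bracket flow $\mu_t$ governed by the ODE already analyzed. That analysis should show that $\mu_t$ converges to a nonzero limit bracket $\mu_\infty$, which by the previous step is the bracket of the algebraic soliton $\tilde\omega_\infty$ on $\mathbb H^s\times\mathbb C^s$; the equivalence between convergence of the bracket flow and Cheeger-Gromov convergence of the associated left-invariant metrics then yields $(\mathbb H^s\times\mathbb C^s,\tfrac{1}{1+t}\omega_t)\to(\mathbb H^s\times\mathbb C^s,\tilde\omega_\infty)$, and the independence of $\mu_\infty$ from the initial data explains why the soliton acts as a universal attractor. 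The main obstacle throughout is the first step: producing a sufficiently explicit and structurally transparent formula for $\rho_B^{1,1}$ on the entire parametrized family, since the collapse rate, the resolution of the soliton equation, and the non-degeneracy of the bracket-flow limit all depend on its precise form.
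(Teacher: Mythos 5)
Your plan follows the paper's architecture quite closely (parametrize left-invariant pluriclosed metrics, compute $\rho_B^{1,1}$ on that family, reduce to an ODE, collapse via the torus fibration, classify solitons through Lauret's algebraic criterion, and obtain Cheeger-Gromov convergence on the cover), and your bracket-flow route for the last step is a legitimate alternative to the paper's explicit construction of automorphisms $\varphi_t$ with $d\varphi_t=\exp(s(t)D)$. However, two of your steps, as described, would fail. First, the collapsing mechanism is misidentified. The actual ODE (Corollary \ref{Cor4.5}) gives $B_j'=0$ and $A_i\sim \tfrac34 t$: there is no transversal contraction in the unnormalized flow, and after dividing by $1+t$ only the $\mathbb C^s$-fibre coefficients decay. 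The $x_a$-translation directions, which account for $s$ of the $3s$ fibre directions, carry a normalized metric converging to a \emph{nonzero} multiple of the Poincar\'e coefficients, so your proposal to ``bound the fibre diameters by the decaying coefficients'' cannot close for those directions; likewise the metric grows along all $2s$ directions of $\mathfrak h$, not ``precisely along the $s$ directions of $\mathfrak a$''. The Gromov-Hausdorff collapse of the whole $\mathbb T^{3s}$-fibre instead requires the non-metric input that the leaves $\pi(\{z\}\times\mathbb C^s)$ of $\ker\omega_\infty$ are \emph{dense} in the fibres (\cite{sima}): one reaches any point of the fibre up to $\epsilon$ through curves tangent to the genuinely collapsing $\mathbb C^s$-directions. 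This is exactly what conditions 1--3 of Proposition \ref{GH} encode, and without it your argument only collapses $2s$ of the $3s$ fibre directions.

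Second, your soliton criterion is the wrong one for this flow. The equivalence ``$P=c\,\mathrm{Id}+D$ with $D\in{\rm Der}(\mathfrak g)$'' is Lauret's characterization for the \emph{Chern-Ricci} flow, and it is available there only because $\rho_C$ is independent of the metric (Theorem \ref{Jorge}). For the pluriclosed flow, where $\rho_B^{1,1}$ depends nonlinearly on $\omega$, the correct condition (used in Corollary \ref{Cor 5.4}, from \cite[Section 7]{lauret}) is
$$
\rho_B^{1,1}(\cdot,\cdot)=c\,\omega(\cdot,\cdot)+\tfrac12\left(\omega(D\cdot,\cdot)+\omega(\cdot,D\cdot)\right),\qquad DJ=JD,
$$
in which only the $\omega$-symmetrization of the derivation enters. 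This is not a cosmetic difference: the exclusion of non-diagonal metrics hinges on tracking both halves of the symmetrization. From the $(Z_{\tilde j},\bar W_{\tilde j})$ and $(\bar Z_{\tilde j},W_{\tilde j})$ components one gets $K=c+\tfrac12 r_{\tilde j}^{s+\tilde j}$ and $\bar K=c+\tfrac12 m_{\tilde j}^{s+\tilde j}$, forcing $c=2\Re\mathfrak e(K)>0$, which contradicts the sign $c<0$ coming from the $(Z_{\tilde j},\bar Z_{\tilde j})$ component; with your unsymmetrized criterion this sign argument does not run as stated. Finally, a small caveat on your last step: the limit $\tilde\omega_\infty=3\omega_\infty+\omega_{|\mathfrak I\oplus\mathfrak I}$ retains the initial fibre coefficients $B_i(0)$, so the limit is independent of the initial data only up to equivariant isometry (rescaling the $W_i$, which is admissible since ${\rm Der}(\mathfrak g)$ contains the corresponding diagonal endomorphisms), not on the nose.
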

Here we recall that a left-invariant Hermitian metric $\omega$ on a Lie group $G$ with 
a left-invariant complex structure is an {\em algebraic soliton} for a geometric flow    
of left-invariant Hermitian metrics if $\omega_t=c_t\varphi_t^*(\omega)$ solves the flow, where $\{c_t\}$ is a positive scaling and 
$\{\varphi_t\}$ is a family of automorphims of $G$ preserving the complex structure. 
Moreover the distance $d$ in the statement is the distance induced by  $3\omega_{\infty}$ on the torus base of $M$. Now we describe the condition {\em diagonal} appearing in the statement of Theorem \ref{main2}. The existence of a pluriclosed metric on an Oeljeklaus-Toma manifold imposes some restrictions, see \cite[Corollary 3]{A}. In particular, the manifold has type $(s,s)$ and  admits a left-invariant $(1,0)$-coframe 
$\{\omega^1,\dots,\omega^s,\gamma^1,\dots,\gamma^s\}$ satisfying 
\begin{equation*}
\begin{cases}
d\omega^k=\frac{\sqrt{-1}}{2}\omega^k\wedge\bar \omega^k\quad & k=1,\dots,s\,,\\
d\gamma^i=\sum_{k=1}^s\lambda_{ki}\,\omega^k\wedge \gamma^{i}-\sum_{k=1}^s\,\lambda_{ki}\bar{\omega}^{k}\wedge\gamma^i\quad& i=1,\dots, s\,,
\end{cases}
\end{equation*}
with 
$$
 \Im\mathfrak m\, \lambda_{ki}=-\frac14\,\delta_{ik} \,.
 $$
By $\omega$ {\em diagonal} we mean that it takes a diagonal form with respect to such a coframe.  The first part of Theorem \ref{main2} in the case of the Inoue-Bombieri surfaces is proved in \cite[Corollary 3.18]{boling}.

Theorem  \ref{main2} provides a description of the long-time behavior of the solution $\omega_t$ to the pluriclosed flow as $t\to \infty$. For the definition of the convergence in the Gromov-Hausdorff sense we refer to Section 3 in the preset paper, while here we briefly recall the definition of convergence in the Cheeger-Gromov sense:\\
a sequence of pointed riemannian manifolds $(M_k, g_k, p_k)$ {\em converges in the Cheeger-Gromov sense to a pointed riemannian manifold $(M,g,p)$} if there exists a sequence of open subsets $A_k$ of $M$  so that every compact subset of $M$ eventually lies in some $A_k$,  and a sequence of smooth maps $\phi_k\colon A_k\to M_k $ which are diffeomorphisms onto some open set of $M_k$ which satisfy $\phi_k(p_k)=p$, such that 
  $$
  \phi_k^*(g_k)\to g\quad \mbox{smoothly on every  compact subset, as } k\to \infty.
  $$
See \cite[Section 6]{LauretCG} for a deep analysis of Cheeger-Gromov convergence both in the general case and in the homogeneous one and \cite[Section 5.1]{lauret} for the case of Hermitian Lie groups. 

\bigskip 
\noindent {\bf Acknowledgment.} Authors are grateful to Daniele Angella, Ramiro Lafuente, Francesco Pediconi and Alberto Raffero for useful conversations. In particular Ramiro Lafuente suggested us how to prove the convergence in the  Cheeger-Gromov  sense in Theorem \ref{main2}.

\medskip 

\section{Definition of Oeljeklaus-Toma manifolds}
We briefly recall the construction of Oeljeklaus-Toma manifolds \cite{OT}. 
 
 \medskip 
Let $\mathbb{Q}\subseteq\mathbb{K}$ be an algebraic number field with $[\mathbb{K}:\mathbb{Q}]=r+2s$ and $r,s\ge 1$. 
Let $\sigma_1,\ldots, \sigma_r\colon\mathbb{K}\to \mathbb{R}$  be the real embeddings of $\mathbb{K}$ and   $\sigma_{r+1},\ldots,\sigma_{r+2s}\colon \mathbb{K}\to \mathbb{C}$ be the complex embeddings of $\mathbb{K}$ satisfying $\sigma_{r+s+i}=\bar \sigma_{r+i},$ for every $ i=1,\ldots, s.$ We denote by $\mathcal{O}_{\mathbb{K}}$ the ring of algebraic integers of $\mathbb{K}$ and  by $\mathcal{O}_{\mathbb{K}}^*$
the group of units of $\mathcal{O}_{\mathbb{K}}$. Let 
$$
\mathcal{O}_{\mathbb{K}}^{*,+}=\{u\in\mathcal{O}_{\mathbb{K}}^* \quad | \quad \sigma_i(u)>0\, , \quad \mbox{for every } i=1,\ldots, r\}
$$
be the group of totally positive units of $\mathcal{O}_{\mathbb{K}}$.
The groups $\mathcal{O}_{\mathbb{K}}$ and $\mathcal{O}_{\mathbb{K}}^{*,+}$ act on $\mathbb H^r\times \mathbb C^s$ as  
$$
a\cdot(z_1,\ldots, z_r,w_1,\ldots, w_s)=(z_1+\sigma_1(a),\ldots, z_r+\sigma_r(a), w_1+\sigma_{r+1}(a),\ldots, w_s+\sigma_{r+s}(a))\,, \quad \mbox{for all } a\in \mathcal{O}_{\mathbb{K}}
$$
and 
$$
u\cdot(z_1,\ldots, z_r,w_1,\ldots, w_s)=(\sigma_1(u)z_1,\ldots, \sigma_r(u)z_r, \sigma_{r+1}(u)w_1,\ldots, \sigma_{r+s}(u) w_s)\, , \quad \mbox{for every } u\in \mathcal{O}_{\mathbb{K}}^{*,+}\, .
$$
There always exists a free subgroup $U$ of rank $r$ of $\mathcal{O}_{\mathbb{K}}^{*,+}$ such that ${\rm pr}_{\mathbb{R}^r}\circ l(U)$ is a lattice of rank $r$ in $\mathbb{R}^r$, where
$l\colon \mathcal{O}_{\mathbb{K}}^{*,+}\to \mathbb{R}^{r+s}$
is the logarithmic representation of units 
$$
l(u)=(\log \sigma_1(u),\ldots, \log\sigma_r(u),2\log\lvert\sigma_{r+1}(u)\rvert,\ldots, 2\log\lvert\sigma_{r+s}(u)\rvert)
$$
and ${\rm pr}_{\mathbb{R}^r}\colon \mathbb{R}^{r+s}\to \mathbb{R}^r$ is the projection on the first $r$ coordinates. 
The action of $U\ltimes\mathcal{O}_{\mathbb{K}}$ on $\mathbb H^r\times \mathbb C^s$ is free, properly discontinuous  
and co-compact. An {\em Oeljeklaus-Toma manifold} is then defined as the quotient 
$$
M:=\frac{\mathbb H^r\times \mathbb C^s}{U\ltimes\mathcal{O}_{\mathbb{K}}}
$$
and it is a compact complex manifold having complex dimension $r+s$.  

The structure of torus bundle of an Oeljeklaus-Toma manifold can be seen as follows:  \\
we have
$$
\frac{\mathbb H^r\times \mathbb C^s}{ \mathcal O_{\mathbb{K}}}=\mathbb R_+^{r}\times \mathbb T^{r+2s}
$$ 
and that the action of $U$ on $\mathbb H^r\times \mathbb C^s$ induces an action on  $\mathbb R_+^{r}\times \mathbb T^{r+2s}$
such that, for every $x\in \mathbb \R^r_+$ and $u\in U$, the induced map 
$$
u\colon (x,\mathbb T^{r+2s})\mapsto (\sigma_1(u)x_1,\ldots, \sigma_r(u)x_r,\mathbb T^{r+2s}) 
$$
is a diffeomorphism. Hence  
$$
M=\frac{\mathbb R_+^{r}\times \mathbb T^{r+2s}}{U}
$$
inherits the structure of a $\mathbb T^{r+2s}$-bundle over $\mathbb T^r$.  
We denote by $\pi$ and $F$ the projections 
$$
\pi\colon \mathbb H^r\times \mathbb C^s\to M\,,\quad F\colon  M\to\mathbb T^r\,. 
$$

From the viewpoint of Lie groups, the universal covering of an Oeljeklaus-Toma manifold $M$ has a natural structure of solvable 
Lie group $G$ and the complex structure on $M$ lifts to a left-invariant complex structure \cite{Kas}. Therefore, Oeljeklaus-Toma manifolds can be seen as compact solvmanifolds with a left-invariant complex structure. The solvable structure on the universal covering of $M$ can be described in terms of the existence of a left-invariant 
$(1,0)$-coframe $\{\omega^1,\dots,\omega^r,\gamma^1,\dots,\gamma^s\}$ such that 
\begin{equation}
\label{eqsstr}
\begin{cases}
d\omega^k=\frac{\sqrt{-1}}{2}\omega^k\wedge\bar \omega^k\quad & k=1,\dots,r\,,\\
d\gamma^i=\sum_{k=1}^r\lambda_{ki}\,\omega^k\wedge \gamma^{i}-\sum_{k=1}^r\,\lambda_{ki}\bar{\omega}^{k}\wedge\gamma^i\quad & i=1,\dots, s\,,
\end{cases}
\end{equation}
where 
$$
\lambda_{ki}=\frac{\sqrt{-1}}{4}b_{ki}-\frac{1}{2}c_{ki} 
$$
and $b_{ki},c_{ki}\in \R$ depend on the embeddings $\sigma_j$ as  
\begin{equation}\label{sigmar}
\sigma_{r+i}(u)=\left(\prod_{k=1}^r(\sigma_k(u))^{\frac{b_{ki}}{2}}\right)e^{\sqrt{-1}\sum_{k=1}^r c_{ki} \log\sigma_k(u) }\,, 
\end{equation}
for any $u\in U$, $k=1,\dots, r$ and $i=1,\dots, s$. Since $U\subseteq\mathcal{O}_{\mathbb{K}}^*$,  it is easy to see that 
$$
l(U)\subseteq\left\{x\in \R^{r+s}\quad \middle|\quad \sum_{i=1}^{r+s}x_i=0\right\}.
$$ This fact together with \eqref{sigmar} implies that, for every $u\in U$, 
$$
\sum_{i=1}^r\log\sigma_i(u)\left(1+\sum_{k=1}^sb_{ik}\right)=0\, ,
$$
which, since  ${\rm pr}_{\mathbb{R}^r}\circ l(U)$ is a lattice of rank $r$ in $\mathbb{R}^r$, is equivalent to 
\begin{equation}\label{bkappai}
\sum_{k=1}^sb_{ik}=-1\,,\quad \mbox{for all } i=1,\ldots, r\,.
\end{equation}

The dual frame $\{Z_1,\dots, Z_r,W_1,\dots,W_s\}$ to $\{\omega^1,\dots,\omega^r,\gamma^1,\dots,\gamma^s\}$ satisfies the following structure equations:  
$$
[Z_k,\bar Z_k]=\,-\frac{\sqrt{-1}}{2}(Z_k+\bar Z_k)\,,\quad 
[Z_k,W_i]=-\lambda_{ki}W_i\,,\quad 
[ Z_k,\bar  W_i]=\bar \lambda_{ki} \bar W_i\,,
$$
for $k=1,\dots, r$, $i=1,\dots, s$. Consequently the Lie algebra $\g$ of the universal covering of $M$ splits as vector space as 
$$
\mathfrak{g}=\mathfrak{h}\oplus \mathfrak{I}
$$
where $\mathfrak{I}$ is an abelian ideal  and $\mathfrak{h}$ is a subalgebra isomorphic to $\underbrace{\mathfrak{f}\oplus \dots \oplus  \mathfrak{f}}_{\mbox{$r$-times}}$, where $\mathfrak f$ is the {\em filiform} Lie algebra $\mathfrak{f}=\langle e_1,e_2\rangle$, $[e_1,e_2]=-\tfrac12 e_1$. The complex structure $J$ induced on $\mathfrak g$ preserves both 
$\mathfrak h$ and $\mathfrak I$ and its restriction $J_{\mathfrak h}$ on $\mathfrak h$ satisfies 
$$
J_\mathfrak{h}=\underbrace{J_{\mathfrak f}\oplus \dots \oplus J_{\mathfrak f}}_{\mbox{$r$-times}}\, ,
$$
where $J_{\mathfrak f}$ is the complex structure on $\mathfrak f$ defined by $J_{\mathfrak f}(e_1)= e_2$. Moreover 
$$
[\mathfrak h^{1,0},\mathfrak I^{0,1}]\subseteq \mathfrak I^{0,1}\,.
$$

\section{Convergence in the Gromov-Hausdorff  sense}
We briefly recall Gromov-Hausdorff   convergence of metric spaces. The {\em Gromov-Hausdorff distance}  between two metric spaces $(X,d_X)$, $(Y,d_Y)$ is the infimum of all positive $\epsilon$ for which there exist two functions $F\colon X\to Y$, $G\colon Y\to X$, not necessarily continuous, satisfying the following four properties
$$
\begin{array}{cc}
|d_X(x_1,x_2)-d_Y(F(x_1),F(x_2))|\leq \epsilon\,, &  d_X(x,G(F(x)))\leq \epsilon\,,\\
|d_Y(y_1,y_2)-d_X(G(y_1),G(y_2))|\leq \epsilon\,, &  d_Y(y,F(G(y)))\leq \epsilon\,,
\end{array}
$$
for all $x,x_1,x_2\in X$ and $y,y_1,y_2\in Y$. If  $\{d_t\}_{t\in[0,\infty)}$ is a $1$-parameter family of distances on $X$, $(X,d_t)$ {{\em converges to $(Y,d_Y)$ in the Gromov-Hausdorff sense} if the Gromov-Hausdorff distance between $(X,d_t)$ and $(Y,d)$ tends to $0$ as $t\to \infty$.

\medskip 
Let $\{\omega_t\}_{t\in [0,\infty)}$ be a smooth curve of Hermitian metrics on an Oeljeklaus-Toma manifold and let $d_t$ be the induced distance on $M$. For a smooth curve $\gamma$ on $M$, let $L_t(\gamma)$ be   the length of $\gamma$ with respect to $\omega_t$. We further denote by $\mathcal H$ the foliation induced by $\mathfrak h$ on $M$. 

\begin{prop}\label{GH}
Let $\{\omega_t\}_{t\in [0,\infty)}$ be a smooth curve of Hermitian metrics on an Oeljeklaus-Toma manifold such that 
$$
\lim_{t\to \infty}\omega_t=\omega_{\infty}
$$
pointwise. 
Assume that there exist $T\in (0,\infty)$ and $C>0$ such that 
\begin{enumerate}
\item[1.] $L_t(\gamma)\leq C L_0(\gamma)\,,$ for every smooth curve $\gamma$ in $M$;

\vspace{0.1cm}
\item[2.] $L_t(\gamma)\leq (C/\sqrt{t})L_0(\gamma)$, for every smooth curve $\gamma$ in $M$ such that $\dot \gamma\in \ker \omega_\infty$.
\end{enumerate}
Assume further 
 \begin{enumerate}
\item[3.] for every $\epsilon,\ell >0$, there exists $T>0$ such that $|L_t(\gamma)-L_{\infty}(\gamma)|<\epsilon$, for every $t>T$ and every curve $\gamma$ in $M$ tangent to $\mathcal H$ and such that $L_\infty(\gamma)<\ell $. 
\end{enumerate}
Then $(M,d_t)$ converges in the Gromov-Hausdorff sense to $(\mathbb T^{r},d)$, where $d$ is the distance induced by  $\omega_{\infty}$ onto $\mathbb T^{r}$. 
\end{prop}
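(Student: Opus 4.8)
\emph{Proof strategy.} The plan is to build explicit Gromov--Hausdorff approximations out of the torus fibration $F\colon M\to\T^{r}$. Since $\omega_\infty$ is the pull-back under $F$ of the flat metric $g$ on the base that realises $d$, the distribution $\ker\omega_\infty$ is exactly the vertical distribution tangent to the fibres of $F$, and $\mathcal H$ contains an $r$-dimensional complement $\mathcal H_{0}$ to it on which $dF$ restricts to an isometry onto $(\T^{r},g)$. Fix a section $\iota\colon\T^{r}\to M$ of $F$ (no continuity is needed, since the maps in the definition of Gromov--Hausdorff distance are allowed to be arbitrary). I will verify the four defining inequalities for the pair $(F,\iota)$ with an error $\epsilon=\epsilon(t)\to0$; as $F\circ\iota=\mathrm{id}$ one of them is automatic, and the remaining content is that (i) the fibres collapse and (ii) $F$ is an asymptotic isometry between the base and the image of $\iota$.

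First the collapse of the fibres. Any two points of a fixed fibre $F^{-1}(a)$ are joined by a curve contained in $F^{-1}(a)$, hence everywhere tangent to $\ker\omega_\infty$, and of $\omega_0$-length at most $D_0:=\sup_{a}\operatorname{diam}_{\omega_0}F^{-1}(a)$, which is finite by compactness of $M$. Hypothesis (2) then gives $d_t(p,q)\le (C/\sqrt{t})\,D_0$ whenever $F(p)=F(q)$, so $\operatorname{diam}_{d_t}F^{-1}(a)\le (C/\sqrt{t})D_0$ uniformly in $a$; in particular $d_t(p,\iota(F(p)))\le\epsilon$. For the upper bound $d_t(\iota(a),\iota(b))\le d(a,b)+\epsilon$ I take a minimizing $g$-geodesic in $\T^{r}$ from $a$ to $b$ and lift it to a curve $\gamma$ tangent to $\mathcal H_{0}$ (hence to $\mathcal H$) starting at $\iota(a)$; by construction $\gamma$ projects onto the geodesic, so $L_\infty(\gamma)=d(a,b)$, and hypothesis (3), applied after subdividing $\gamma$ into arcs with $L_\infty<\ell$, yields $L_t(\gamma)\le d(a,b)+\epsilon$. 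Concatenating $\gamma$ with a path inside $F^{-1}(b)$ of $d_t$-length $\le (C/\sqrt{t})D_0$ reaching $\iota(b)$ proves the bound.

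The crucial and hardest point is the matching lower bound $d_t(\iota(a),\iota(b))\ge d(a,b)-\epsilon$, equivalently that $F\colon(M,d_t)\to(\T^{r},d)$ is asymptotically $1$-Lipschitz. This reduces to the pointwise estimate $\omega_t\ge(1-\epsilon)\,F^{*}g$ on horizontal vectors for $t$ large, that is, to the fact that a displacement in the base cannot be produced more cheaply in $\omega_t$ by diving into the collapsing fibre directions. I expect this to be the main obstacle, because it is exactly here that one must rule out ``leakage'': with $\ker\omega_\infty$ collapsing, the relevant quantity is the Schur complement of $\omega_t$ onto the horizontal space, and its convergence to $g$ is not formal—it requires controlling the off-diagonal blocks of $\omega_t$ coupling $\mathfrak h$ and $\mathfrak I$, as well as those coupling distinct summands inside $\mathfrak h$. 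The plan is to extract this control from the $J$-invariant splitting $\mathfrak g=\mathfrak h\oplus\mathfrak I$: being $J$-Hermitian forces $X_k\perp_{\omega_t}JX_k$ within each factor, while the explicit description of the left-invariant metrics underlying hypothesis (3) forces the remaining couplings to become negligible as $t\to\infty$. Granting this, the $\omega_t$-length of an arbitrary curve from $\iota(a)$ to $\iota(b)$ is bounded below by that of its $\mathcal H$-component, and hypothesis (3) gives $L_t(\gamma)\ge L_\infty(\gamma)-\epsilon\ge d(a,b)-\epsilon$.

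Finally I assemble the general estimate $|d_t(p,q)-d(F(p),F(q))|\le\epsilon$ from the three special cases: inserting $\iota(F(p))$ and $\iota(F(q))$ and using the triangle inequality together with the uniform fibre bound reduces it to the comparison along $\iota$ established above. Since $\epsilon=\epsilon(t)\to0$, the Gromov--Hausdorff distance between $(M,d_t)$ and $(\T^{r},d)$ tends to $0$, which is the assertion.
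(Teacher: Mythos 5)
There is a genuine gap, and it sits at the very foundation of your construction: the claim that ``$\ker\omega_\infty$ is exactly the vertical distribution tangent to the fibres of $F$'' (equivalently, that $\omega_\infty=F^*g$) is false. The fibres of $F\colon M\to\mathbb{T}^r$ are $(r+2s)$-dimensional tori, while $\ker\omega_\infty$ is the $2s$-dimensional distribution spanned by the $\mathbb{C}^s$-directions: $\omega_\infty$ comes from the Poincar\'e metric on $\mathbb{H}^r$ and is nondegenerate on the whole of $\mathfrak h$, including the $r$ fibre directions coming from $\Re\mathfrak{e}\, z_a$. One only has the inequality $L_g(F(\gamma))\leq L_\infty(\gamma)$, with equality on the $r$-dimensional distribution $\mathcal Y={\rm span}\{\tfrac{1}{2\sqrt{-1}}(Z_i-\bar Z_i)\}$, not an isometric identification of $\omega_\infty$ with the pulled-back base metric. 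As a consequence your fibre-collapse step breaks: two points of a fibre $F^{-1}(a)$ cannot in general be joined by a curve tangent to $\ker\omega_\infty$, since the leaf of that foliation through $p=\pi(z,w)$ is $\pi(\{z\}\times\mathbb{C}^s)$, a \emph{proper} immersed submanifold which is merely \emph{dense} in the fibre (this density, from Verbitsky's work, is the crucial input in the paper's proof). So hypothesis (2) alone does not bound ${\rm diam}_{d_t}F^{-1}(a)$: you must travel along the dense leaf inside a fixed compact set $\pi(\{z\}\times \bar B_R)$ at cost $C'/\sqrt t$ via (2), and then close the remaining gap of $d_0$-size $\epsilon/(2C)$ using hypothesis (1). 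Without this density argument, the collapse of the $\Re\mathfrak{e}\, z_a$ fibre directions---on which $\omega_\infty$ is uniformly nondegenerate---is simply not established, and this is exactly why both hypotheses (1) and (2) appear in the statement.

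Your second issue is the ``crucial lower bound'' $d_t\geq d\circ(F\times F)-\epsilon$, which you do not prove (``Granting this\dots''), and for which the Schur-complement/off-diagonal-block strategy is both unjustified and off-target: the proposition concerns an arbitrary smooth curve of Hermitian metrics satisfying 1--3, so you may not invoke left-invariant structure of $\omega_t$, and hypothesis (3) cannot be applied to ``the $\mathcal H$-component'' of an arbitrary curve, since it only concerns curves genuinely tangent to $\mathcal H$. In the paper this step needs no such machinery: for a curve $\gamma$ realizing $d_t(p,q)$ one has the chain $d(F(p),F(q))\leq L_g(F(\gamma))\leq L_\infty(\gamma)\leq L_t(\gamma)+\epsilon=d_t(p,q)+\epsilon$, where the middle inequality is the pointwise domination $L_g(F(\cdot))\leq L_\infty(\cdot)$ valid for \emph{every} curve (so the ``leakage'' you worry about never arises---$g$ only measures base displacement and $\omega_\infty$ dominates it), and the last inequality uses the convergence $\omega_t\to\omega_\infty$. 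Your upper-bound construction (lifting a base geodesic tangent to a horizontal complement, essentially the paper's $\mathcal Y$, and invoking (3)) is sound and matches the paper; but as written the proposal fails on the fibre collapse and leaves the lower bound as an acknowledged conjecture, so it does not constitute a proof.
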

\begin{proof}
We follow the approach in \cite[Section 5]{TWCom} and in \cite[Proof of Theorem 1.1]{Zheng}. Let $M$ be an Oeljeklaus-Toma manifold. Consider the structure of $M$ as $\mathbb T^{r+2s}$-bundle over a  $\mathbb T^{r}$. Let $F\colon M\to \mathbb T^{r}$ be the projection onto the base and let $G\colon \mathbb T^{r}\to M$ be an arbitrary map such that  $F\circ G={\rm Id}_{\mathbb T^{r}}$.  We show that, for every $\epsilon>0$, there exists $T>0$ such that 
\begin{eqnarray}
&& |d_t(p,q)-d(F(p),F(q))|\leq \epsilon\,, \label{GH1} \\ 
&& |d(a,b)-d_t(G(a),G(b))|\leq \epsilon\,, \label{GH2} \\
&&  d_t(p,G(F(p)))\leq \epsilon\,, \label{GH3} \\
&& d(a,F(G(a)))\leq \epsilon\,, \label{GH4}
\end{eqnarray}
for every $t\geq T$, $p,q\in M$, $a,b\in \mathbb T^r$ which implies the statement.

Note that \eqref{GH4} is trivial since 
$$
d(a,F(G(a)))=0\,,
$$ 
for every $a\in \mathbb T^r$.

Then,  we show that \eqref{GH3} is satisfied. Let  $p,q\in M$  be two points in the same fiber over $\mathbb T^r$. Assume $p=\pi(z,w)$. We denote with $\mathcal{L}_{(z,w)}$ the leaf of the foliation $\ker\omega_{\infty}$ on the universal covering of $M$ passing through $(z,w)$. We easily see that,  for all $(z,w)\in\mathbb H^r\times \mathbb C^s$, $\mathcal{L}_{(z,w)}=\{z\}\times\mathbb C^s$.
In view of \cite[Section 2]{sima}, for every $z\in \mathbb H^r$, 
 $\pi(\{z\}\times \mathbb C^s)$ is  the leaf of the foliation $\ker\omega_{\infty}$ on $M$ passing through $p$ and it is dense in the fiber  $F^{-1}(F(p))$.
 Let $B_{R}$ be  the standard ball  in $\mathbb C^s$ about the origin having radius $R$. We can choose $R$ so that 
 every point in $F^{-1}(F(p))$ has distance with respect to $d_0$ less than $\epsilon/2C$ to $\pi(\{z\}\times \bar B_{R})$. On the other hand, given two points in $\pi(\{z\}\times \bar B_{R})$, they can be joined with a curve $\gamma$ in $F^{-1}(F(p))$ which is tangent to $\ker \omega_\infty$. Hence, for any such curve,  condition 2. implies 
 $$
 L_t(\gamma)\leq \frac{C'}{\sqrt t}\,,
 $$
 for a uniform constant $C'$ depending only on $R$. Let $p_0=\pi(z,0)$, $\gamma_1$ be a curve in $F^{-1}(F(p))$ connecting $p$ with $p_0$ tangent to $\ker\omega_\infty$ and  
 $\gamma_2$ be a curve connecting $p_0$ with $q$ 
 having minimal length with respect to $d_0$.  Hence, by using 1., for $t$ sufficiently large, we have 
 $$
 d_t(p,q)\leq L_{t}(\gamma_1)+L_t(\gamma_2)\leq \frac{C'}{\sqrt{t}}+CL_{0}(\gamma_2)\leq \frac{C'}{\sqrt{t}}+\frac{\epsilon}{2}\leq \epsilon\,,
$$ 
i.e. 
$$
d_t(p,q)\leq \epsilon 
$$ 
and  \eqref{GH3} follows.  

\medskip 
Next we show \eqref{GH1} and \eqref{GH2}. First of all,   we denote with $g$ the riemannian metric on $\mathbb{T}^r$ induced by $\omega_{\infty}$, for an explicit expression of $g$ see \cite[Section 2]{Zheng},  and we observe that  
\begin{equation}\label{Lg}
L_{g}(F(\gamma))\leq L_{\infty}(\gamma)\,,\mbox{ for every curve $\gamma$ in $M$,}
\end{equation}
and the equality holds if and only if
$$
\dot \gamma\in\mathcal{Y}={\rm span}_{\C}\left\{ \frac{1}{2\sqrt{-1}}\left(Z_i-\bar Z_i\right)\quad \middle | \quad i=1,\ldots, r \right\} .
$$

Let $p,q\in M$. We can find a curve $\gamma$ in $M$ connecting $p$ with a point $\tilde{q}$ in the $\mathbb T^{r+2s}$-fiber containing $q$ which is    
tangent to $\mathcal Y$  and such that $F(\gamma)$ is a minimal geodesic on $(\mathbb T^r,g)$, see for instance \cite[Proof of Theorem 5.1]{TWCom} or \cite[Proof of Theorem 1.1]{Zheng}.   By applying 3. we have 
 
$$
d_t(p,q)\leq d_t(p,\tilde q)+d_t(\tilde q,q)\leq d_t(p,\tilde q)+\epsilon\leq L_{t}( \gamma)+\epsilon\leq L_{\infty}(\gamma) +2\epsilon= L_g(F(\gamma))+ 2\epsilon=d(F(p),F(q))+2\epsilon\,, 
$$
for $t$ big enough, i.e. 
\begin{equation}\label{pippo}
d_t(p,q)-d(F(p),F(q))\leq \,2\epsilon\,,
\end{equation}
for $t$ sufficiently large. 

Next, using again \eqref{Lg},
we obtain, for $p,q\in M$, 
$$
d(F(p),F(q))\leq L_g(F(\gamma))\le L_{\infty}(\gamma)\leq L_{t}(\gamma)+\epsilon=d_t(p,q)+\epsilon\,,
$$
for $t$ big enough, 
where $\gamma$ is curve which realizes the  distance $d_t(p, q)$. Hence we obtain 
\begin{equation}\label{franco}
 d(F(p),F(q))-d_t(p,q)\le \epsilon\,.
\end{equation}
By substituting $p=G(a)$ and $q=G(b)$ in \eqref{pippo} and \eqref{franco} we infer 
$$
-\epsilon\leq d_t(G(a),G(b))-d(a,b)\leq 2\epsilon
$$
and \eqref{GH1} and \eqref{GH2} follow. 
\end{proof}

\section{The left-invariant Chern-Ricci flow on Oeljeklaus-Toma manifolds}\label{Sec3}
Given a Hermitian manifold $(M,\omega)$, the Chern connection of $\omega$ is the unique connection $\nabla$ on $(M,\omega)$ preserving both $\omega$ and the complex structure such that the $(1,1)$-component of its torsion tensor is vanishing.  The {\em Chern-Ricci form } of $\omega$ is the real closed $(1,1)$-form
$$
\rho_C(X,Y):=\sqrt{-1}\sum_{i=1}^n R_C(X,Y,X_i,\bar X_i)\,,
$$
where $R_C$ is the curvature tensor of $\nabla$ and $\{X_i\}$ is a unitary frame of $\omega$. The {\em Chern-Ricci flow} is then defined as the geometric flow 
$$
\partial_t\omega_t=-\rho_C(\omega_t)\,,\quad \omega_{|t=0}=\omega\,.
$$ 

In this section we prove the following 
\begin{prop}\label{main1}
Let $\omega$ be a left-invariant Hermitian metric on an Oeljeklaus-Toma manifold $M$. Then $\omega$ lifts to an expanding algebraic 
soliton for the Chern-Ricci flow on the universal covering of $M$ if and only if it takes the following expression with respect to the coframe $\{\omega^1,\dots,\omega^r,\gamma^1,\dots,\gamma^s\}$ satisfying \eqref{eqsstr}:
\begin{equation}\label{CRS}
\omega=\sqrt{-1}\left(A\sum_{i=1}^r\omega^i\wedge\bar \omega^i+\sum_{i,j=1}^sg_{r+i\overline{r+j}}\gamma^i\wedge\bar\gamma^j\right)\,.
\end{equation}
 Moreover, the Chern-Ricci flow starting from $\omega$ has a long-time solution $\{\omega_t\}$ such that $(M,
\frac{\omega_t}{1+t})$ converges as $t\to \infty$ in the Gromov-Hausdorff sense to $(\mathbb T^r,d)$, where $d$ is the distance induced by  $\omega_{\infty}$ onto $\mathbb T^{r}$. Finally, $(\mathbb H^r\times \C^s, \frac{\omega_t}{1+t})$ converges in the Cheeger-Gromov sense to $(\mathbb H^r\times \C^s, \tilde{\omega}_{\infty})$ where $\tilde{\omega}_{\infty}$ is an algebraic soliton.
\end{prop}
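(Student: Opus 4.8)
The plan is to exploit the fact that, for a left-invariant metric on an Oeljeklaus-Toma manifold, the Chern-Ricci form is the same for every left-invariant metric. Writing $\Omega=\omega^1\wedge\dots\wedge\omega^r\wedge\gamma^1\wedge\dots\wedge\gamma^s$, the structure equations \eqref{eqsstr} give $d\Omega=\beta\wedge\Omega$ for the left-invariant $(0,1)$-form $\beta=\sum_{k=1}^r\mu_k\bar\omega^k$ with $\mu_k=-\tfrac{\sqrt{-1}}{2}-\sum_{i=1}^s\lambda_{ki}$; since in a fixed left-invariant coframe $\det(g_{a\bar b})$ is constant, the term $\partial\bar\partial\log\det g$ in $\rho_C=-\sqrt{-1}\,\partial\bar\partial\log\det g$ (computed in a holomorphic coframe) drops, and a direct computation from $d\Omega=\beta\wedge\Omega$ yields $\rho_C=\sqrt{-1}\sum_k\Im\mathfrak m(\mu_k)\,\omega^k\wedge\bar\omega^k$. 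Using $\Im\mathfrak m\,\lambda_{ki}=\tfrac14 b_{ki}$ together with \eqref{bkappai} one gets $\Im\mathfrak m\,\mu_k=-\tfrac14$ for every $k$, whence $\rho_C=-\tfrac{\sqrt{-1}}{4}\sum_{k=1}^r\omega^k\wedge\bar\omega^k=-\omega_\infty$ independently of $\omega$. Therefore the flow is the linear equation $\partial_t\omega_t=\omega_\infty$, with explicit solution $\omega_t=\omega+t\,\omega_\infty$; this stays positive for all $t\ge 0$, giving long-time existence, and $\tfrac{\omega_t}{1+t}=\tfrac{1}{1+t}\omega+\tfrac{t}{1+t}\omega_\infty\to\omega_\infty$ pointwise.

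For the soliton classification I would use Lauret's criterion \cite{lauret}: $\omega$ lifts to an algebraic soliton if and only if $-\rho_C=\lambda\,\omega+\omega(\hat D\,\cdot\,,\cdot)+\omega(\cdot,\hat D\,\cdot)$ for some $\lambda\in\R$ and some derivation $\hat D$ of $\g$ commuting with $J$. The main step is to describe such derivations from the structure equations: imposing the derivation identity on $[Z_k,\bar Z_k]=-\tfrac{\sqrt{-1}}{2}(Z_k+\bar Z_k)$ forces $\hat D Z_k=0$ (first the cross-factor and $\bar Z$-components vanish, then the diagonal one dies through $\alpha_{kk}+\bar\alpha_{kk}=0$), while $[Z_k,W_i]=-\lambda_{ki}W_i$ forces $\hat D$ to preserve $\mathfrak I$ and to respect its weight decomposition. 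Splitting the equation along $\g=\mathfrak h\oplus\mathfrak I$ gives: the $\mathfrak h\times\mathfrak h$ block reads $\omega_\infty|_{\mathfrak h}=\lambda\,\omega|_{\mathfrak h}$, forcing $\omega|_{\mathfrak h}=\sqrt{-1}A\sum_i\omega^i\wedge\bar\omega^i$ with $A=\tfrac{1}{4\lambda}$ (diagonal, equal coefficients); the $\mathfrak I\times\mathfrak I$ block is a Lyapunov equation solved by $\hat D|_{\mathfrak I}=-\tfrac\lambda2\,\mathrm{Id}$ and leaves $\omega|_{\mathfrak I}$ arbitrary; and the mixed block, because $\lambda+\hat D|_{\mathfrak I}=\tfrac\lambda2\,\mathrm{Id}+(\text{skew})$ is invertible for $\lambda\neq0$, forces the $\mathfrak h$-$\mathfrak I$ components of $\omega$ to vanish. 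This is exactly \eqref{CRS}; conversely \eqref{CRS} gives a soliton by taking $\lambda=\tfrac{1}{4A}>0$ (hence expanding), $\hat D|_{\mathfrak h}=0$ and $\hat D|_{\mathfrak I}=-\tfrac\lambda2\,\mathrm{Id}$.

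The Gromov-Hausdorff statement then follows from Proposition \ref{GH} applied to $\tfrac{\omega_t}{1+t}$. Since $\omega_t-\omega$ is supported on $\mathfrak h$ and $\ker\omega_\infty=\mathfrak I$, one checks directly that $\tfrac{\omega_t}{1+t}\le C\,\omega$ (condition 1); that for $\dot\gamma\in\ker\omega_\infty$ one has $\omega_t(\dot\gamma,\dot\gamma)=\omega(\dot\gamma,\dot\gamma)$, so the length scales like $(1+t)^{-1/2}$ (condition 2); and that on $\mathfrak h$, where $\omega_\infty$ is positive definite, $\tfrac{\omega_t}{1+t}-\omega_\infty=\tfrac{1}{1+t}(\omega-\omega_\infty)$ converges uniformly on bounded pieces (condition 3). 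Hence $(M,\tfrac{\omega_t}{1+t})$ converges in the Gromov-Hausdorff sense to $(\mathbb{T}^r,d)$.

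Finally, for the Cheeger-Gromov convergence I would lift everything to $G=\mathbb{H}^r\times\C^s$ and use the automorphisms $\varphi_t$ scaling $\mathfrak I$ by $\sqrt{1+t}$ and fixing $\mathfrak h$; these preserve $J$ and are automorphisms of $G$ because the brackets $[Z_k,W_i]=-\lambda_{ki}W_i$ are homogeneous in $\mathfrak I$. Pulling back, $\varphi_t^*\bigl(\tfrac{\omega_t}{1+t}\bigr)$ keeps the $\mathfrak I$-block equal to $g|_{\mathfrak I}$, sends the mixed block to order $(1+t)^{-1/2}$, and has $\mathfrak h$-block tending to $\omega_\infty|_{\mathfrak h}$; it thus converges smoothly on compact sets to $\tilde\omega_\infty=\tfrac{\sqrt{-1}}{4}\sum_k\omega^k\wedge\bar\omega^k+\sqrt{-1}\sum_{i,j}g_{r+i\overline{r+j}}\gamma^i\wedge\bar\gamma^j$, which has the form \eqref{CRS} and is therefore an algebraic soliton. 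Packaging this as pointed convergence with $\phi_k=\varphi_{t_k}$ in the sense recalled before \cite[Section 6]{LauretCG} gives the claim. I expect the Cheeger-Gromov step to be the main obstacle: although the gauge $\varphi_t$ is explicit, the delicate point is to organize the limit within the homogeneous Cheeger-Gromov framework (equivalently, through the associated bracket flow) and to verify that the convergence is genuinely smooth on compact subsets of the limit, rather than merely pointwise as in the Gromov-Hausdorff collapse.
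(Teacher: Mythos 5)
Your proposal is correct, and it reaches every conclusion of the paper, but through two genuinely different technical engines. For the curvature computation, the paper invokes Lauret's Theorem \ref{Jorge} for metric-independence of $\rho_C$ and then evaluates the algebraic bracket formula \eqref{rhoC} on the canonical metric \eqref{omegacan}; you instead compute the curvature of the anticanonical bundle from the invariant trivialization, via $d\Omega=\beta\wedge\Omega$ with $\beta=\sum_k\mu_k\bar\omega^k$, $\mu_k=-\tfrac{\sqrt{-1}}{2}-\sum_i\lambda_{ki}$. I checked your $\mu_k$ and the resulting $\Im\mathfrak{m}\,\mu_k=-\tfrac14$ against the structure equations \eqref{eqsstr} and \eqref{bkappai}: they reproduce exactly the paper's $\rho_C(Z_i,\bar Z_i)=-\tfrac{\sqrt{-1}}{4}$, i.e.\ $\rho_C=-\omega_\infty$, and your route has the advantage of proving metric-independence directly (the constancy of $\|\Omega\|_g$ kills the $\partial\bar\partial\log\det g$ term for any left-invariant metric) rather than quoting it; a fully rigorous write-up would need to justify the frame formula for the Chern connection on $K^{-1}$ in the non-holomorphic frame $\Omega$, which is where your correction term $\beta-\bar\beta$ enters. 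For the soliton classification, the paper uses the endomorphism $P$ associated to $\rho_C$ together with Lauret's equivalences: condition \eqref{CRS3} gives sufficiency, and condition \eqref{CRS2} plus the structural fact that any $J$-commuting derivation annihilates $\mathfrak{h}$ (proved in Corollary \ref{Cor 5.4}) gives $PZ_i=cZ_i$, hence $g^{\bar i j}=c'\delta_{ij}$ directly. You re-derive the same derivation lemma and then solve the soliton equation blockwise: the $\mathfrak{h}$-block forces $\omega|_{\mathfrak{h}}=\sqrt{-1}A\sum\omega^i\wedge\bar\omega^i$, the $\mathfrak{I}$-block is a Lyapunov equation pinning the Hermitian part of $\hat D|_{\mathfrak{I}}$ to $-\tfrac{\lambda}{2}\,\mathrm{Id}$, and the invertibility of $\tfrac{\lambda}{2}\,\mathrm{Id}+(\text{skew})$ kills the mixed block; this is a correct and essentially self-contained substitute for the paper's use of conditions \eqref{CRS2}--\eqref{CRS3}, at the cost of being slightly longer than the paper's one-line deduction from $P-cI\in{\rm Der}(\mathfrak{g})$. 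The long-time solution $\omega_t=\omega+t\omega_\infty$, the verification of conditions 1--3 of Proposition \ref{GH}, the gauge $d\varphi_t=\exp(s(t)D)$ with $D$ the identity on $\mathfrak{I}$, and the limit $\tilde\omega_\infty=\omega_\infty+\omega_{|\mathfrak{I}\oplus\mathfrak{I}}$ all coincide with the paper. Your closing worry about the Cheeger--Gromov step is unfounded: since all the metrics $\varphi_t^*\tfrac{\omega_t}{1+t}$ are left-invariant, convergence of their coefficients in a fixed left-invariant frame already implies smooth convergence on compact subsets (all covariant derivatives are universal polynomials in the coefficients and the structure constants), which is precisely how the paper concludes, with no need for the bracket-flow formalism.
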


The proof of Proposition \ref{main1} is based on the following Theorem of Lauret 

\begin{theorem}[Lauret \cite{lauret}] \label{Jorge}
Let $(G,J)$ be a Lie group with a left-invariant complex structure. Then the Chern-Ricci form of a left-invariant Hermitian metric $\omega$ on $(G,J)$ does not depend on the Hermitian metric. Moreover, if $P\ne 0 $ is the endomorphism associated to $\rho_C$ with respect to  $\omega$, then the following are equivalent:\begin{enumerate}
\item\label{CRS1} $\omega$ is an algebraic soliton of the Chern-Ricci flow,
\item\label{CRS2} $P=cI + D$, for some $D\in {\rm Der}(\mathfrak{g}),$
\item\label{CRS3} The eigenvalues of 
$P$ are either $0$ or $c$, for some $c\in \R$ with $c\neq 0$, $\ker P$ is an abelian ideal of the Lie algebra of $G$ and   $(\ker P)^\perp$ is a subalgebra.
\end{enumerate}

\end{theorem}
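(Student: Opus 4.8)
The plan is to establish the three assertions in order: first the metric-independence of $\rho_C$, which makes the Chern-Ricci flow linear and underlies everything else, and then the two equivalences, organized as (\ref{CRS1})$\Leftrightarrow$(\ref{CRS2}) (the soliton condition seen as an infinitesimal algebraic identity) and (\ref{CRS2})$\Leftrightarrow$(\ref{CRS3}) (a purely Lie-algebraic translation).

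For the metric-independence, recall that in local holomorphic coordinates one has $\rho_C=-\sqrt{-1}\,\partial\bar\partial\log\det(g_{i\bar j})$. If $\omega,\omega'$ are two left-invariant Hermitian metrics on $(G,J)$, their top powers $\omega^n$ and $(\omega')^n$ are left-invariant sections of the one-dimensional space $\Lambda^{2n}\g^*$, hence proportional by a positive constant $\kappa$. Therefore $\det(g_{i\bar j})/\det(g'_{i\bar j})\equiv\kappa$ in any chart, and $\rho_C(\omega)-\rho_C(\omega')=-\sqrt{-1}\,\partial\bar\partial\log\kappa=0$, so $\rho_C$ depends only on $(G,J)$. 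Consequently the flow equation $\partial_t\omega_t=-\rho_C$ has the explicit linear solution $\omega_t=\omega-t\,\rho_C$, and writing $\rho_C=\omega(P\cdot,\cdot)$ defines the $g$-self-adjoint, $J$-commuting endomorphism $P$.

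To prove (\ref{CRS1})$\Leftrightarrow$(\ref{CRS2}) I would differentiate the soliton ansatz $\omega_t=c_t\varphi_t^*\omega$ at $t=0$. Since $\{\varphi_t\}$ is a one-parameter group of automorphisms of $G$ commuting with $J$, its generator is the flow of the vector field attached to a derivation $D_0\in{\rm Der}(\g)$ with $D_0J=JD_0$, and for a left-invariant $2$-form one has $\frac{d}{dt}\big|_{0}\varphi_t^*\omega=\omega(D_0\cdot,\cdot)+\omega(\cdot,D_0\cdot)$. Comparing with $\partial_t\omega_t|_{0}=-\rho_C=-\omega(P\cdot,\cdot)$ and passing to endomorphisms via $\omega$ yields $P=cI+D$ with $c=-\dot c_0$ and $D\in{\rm Der}(\g)$; note $D=P-cI$ is automatically $g$-self-adjoint and $J$-commuting. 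For the converse I would integrate: given $P=cI+D$, set $\varphi_t=\exp\bigl(s(t)D\bigr)$ (well defined on the simply connected $G$ since $D$ is a derivation) with a suitable reparametrization $s(t)$ and scaling $c_t$, and check directly that $c_t\varphi_t^*\omega=\omega-t\rho_C$ solves the flow for all $t$. This is the step where the infinitesimal identity must be promoted to a genuine solution, and it works precisely because the flow is linear and $\varphi_t^*\omega$ is governed by $D$.

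Finally, (\ref{CRS2})$\Leftrightarrow$(\ref{CRS3}) is linear algebra on $\g$. Since $D=P-cI$ is a self-adjoint derivation it is diagonalizable with real eigenvalues and orthogonal eigenspaces $\g_\mu$, and the derivation property gives the grading $[\g_\mu,\g_\nu]\subseteq\g_{\mu+\nu}$. The implication (\ref{CRS3})$\Rightarrow$(\ref{CRS2}) is immediate: if the eigenvalues of $P$ are only $0$ and $c$ then $\ker P$ and $(\ker P)^\perp$ are the $D$-eigenspaces for $-c$ and $0$, and one checks case by case on the two summands that $D=-c\,{\rm pr}_{\ker P}$ is a derivation, using exactly that $\ker P$ is an abelian ideal and $(\ker P)^\perp$ a subalgebra. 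The reverse implication (\ref{CRS2})$\Rightarrow$(\ref{CRS3}) is the main obstacle: for a general self-adjoint derivation the eigenvalues of $P=cI+D$ need not lie in $\{0,c\}$, so one must invoke the special structure of the Chern-Ricci operator of a left-invariant metric — in particular its explicit expression in terms of traces of ${\rm ad}$ on $\g^{1,0}$ together with the constraint ${\rm tr}\,{\rm ad}(DX)={\rm tr}[D,{\rm ad}(X)]=0$, which forces ${\rm im}\,D$ into the unimodular kernel — to collapse the grading onto the two eigenvalues $0,c$ and to identify $\ker P$ with an abelian ideal and $(\ker P)^\perp$ with a subalgebra. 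Verifying that the Chern-Ricci structure indeed forces this collapse is where the real work lies.
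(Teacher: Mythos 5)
Note first that the paper contains no proof of this statement: it is quoted from Lauret \cite{lauret} (see also Lauret--Rodr\'iguez Valencia \cite{lauret2} for the Chern--Ricci case) and used as a black box, so your attempt can only be measured against Lauret's arguments. Parts of your outline are sound: the metric-independence argument is correct (left translations are biholomorphisms of $(G,J)$ because $J$ is left-invariant, so $\omega^n/(\omega')^n$ is a left-invariant, hence constant, function on the connected group, and the difference of Chern--Ricci forms is $-\sqrt{-1}\,\partial\bar\partial\log\kappa=0$), and so is (\ref{CRS3})$\Rightarrow$(\ref{CRS2}) via the grading $[\g_\mu,\g_\nu]\subseteq\g_{\mu+\nu}$ of the eigenspaces of the self-adjoint derivation $D=P-cI$.

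There are, however, genuine gaps at each of the remaining implications. (a) In (\ref{CRS1})$\Rightarrow$(\ref{CRS2}), the derivative $\tfrac{d}{dt}\big|_{0}\varphi_t^*\omega=\omega(D_0\cdot,\cdot)+\omega(\cdot,D_0\cdot)$ corresponds to the endomorphism $D_0+D_0^{*}$ (metric adjoint), not to $D_0$; the self-adjoint symmetrization of a derivation is in general \emph{not} a derivation, so your "passing to endomorphisms via $\omega$ yields $P=cI+D$ with $D\in{\rm Der}(\g)$" does not follow from differentiation alone --- this is precisely the semi-algebraic versus algebraic soliton issue in \cite{lauret}, and closing it is part of the real content of the theorem. (b) Your integration step for (\ref{CRS2})$\Rightarrow$(\ref{CRS1}) fails for a general derivation: with $\varphi_t=\exp(s(t)D)$ and $D$ self-adjoint one needs $c_t\,e^{2s(t)\mu}=1-t(c+\mu)$ simultaneously on every $D$-eigenspace, and a Taylor expansion in $t$ shows no single pair $(c_t,s(t))$ exists once $D$ has two distinct nonzero eigenvalues; so this implication must route through (\ref{CRS3}) first. (c) The pivotal step (\ref{CRS2})$\Rightarrow$(\ref{CRS3}), which you explicitly leave open, is not driven by unimodularity or traces of ${\rm ad}$ as you guess; the missing idea is the ${\rm Aut}(G,J)$-invariance of $\rho_C$, an immediate corollary of the metric-independence you already proved: $\varphi^*\rho_C(\omega)=\rho_C(\varphi^*\omega)=\rho_C(\omega)$ for every automorphism preserving $J$. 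Infinitesimally, every $J$-commuting derivation $E$ satisfies $\rho_C(E\cdot,\cdot)+\rho_C(\cdot,E\cdot)=0$, i.e.\ $PE+E^{*}P=0$; taking $E=D=P-cI$, which is self-adjoint and $J$-commuting, gives $PD+DP=0$, hence $P^2=cP$, so the spectrum collapses to $\{0,c\}$ in one line (with $c\neq0$ since the symmetric $P$ is nonzero). The $D$-eigenspace grading then gives everything in (\ref{CRS3}): $(\ker P)^{\perp}={\rm im}\,P=\g_0$ is a subalgebra, $[\g_0,\g_{-c}]\subseteq\g_{-c}$ makes $\ker P$ an ideal, and $[\g_{-c},\g_{-c}]\subseteq\g_{-2c}=0$ makes it abelian; moreover, with $P^2=cP$ in hand your exponential ansatz becomes valid verbatim, with $c_t=1-tc$ and $s(t)=\tfrac{1}{2c}\log(1-tc)$. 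Even so, the full implication (\ref{CRS1})$\Rightarrow$(\ref{CRS2}) still requires Lauret's bracket-flow and polar-decomposition analysis, for which your sketch supplies no substitute.
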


\begin{proof}[Proof of Proposition $\ref{main1}$]
Let $M$ be an Oeljeklaus-Toma manifold. Since the Chern-Ricci form does not depend on  the choice of the left-invariant Hermitian metric, 
it is enough to compute $\rho_C$ for the \lq\lq canonical metric\rq\rq  
\begin{equation}\label{omegacan}
\omega=\sqrt{-1}\left(\sum_{i=1}^r\omega^i\wedge\bar\omega^i+\sum_{j=1}^s\gamma^j\wedge\bar \gamma^j\right)\,.
\end{equation}

We recall that the Chern-Ricci form of a left-invariant Hermitian metric $\omega=\sqrt{-1}\sum_{a=1}^n \alpha^a\wedge \bar \alpha^a $ on a Lie group $G^{2n}$ with a left-invariant complex structure
takes the following algebraic expression:
\begin{equation}\label{rhoC}
\rho_C(X,Y) = -\sum_{a=1}^n  (\omega([[X,Y]^{0,1},X_a],\bar X_a) +  \omega ([[X,Y]^{1,0},\bar X_a],X_a ) ) \, ,
\end{equation}
for every left-invariant vector fields $X,Y$ on $G$, where $\{\alpha^i\}$ is a left-invariant unitary $(1,0)$-coframe with dual frame   $\{X_a\}$ (see e.g. \cite{luigiproc}). By applying \eqref{rhoC} to  the canonical metric \eqref{omegacan} we have 
\begin{multline*}
\rho_C(X,Y) =-  \sum_{a=1}^r \{   \omega([[X,Y]^{0,1},Z_a],\bar Z_a) +  \omega([[X,Y]^{1,0},\bar Z_a],Z_a ) \}\\
-\,  \sum_{b=1}^s \{   \omega([[X,Y]^{0,1},W_b],\bar W_b) +\omega([[X,Y]^{1,0},\bar W_b],W_b ) \}\,.
\end{multline*}
Clearly, 																						
$$
\rho_C(Z_i,\bar Z_j)=0\, , \quad \mbox{ for all } i\ne j\,, \quad \rho_{C}(W_i,\bar W_j)=0\,, \quad \mbox{for every }i,j=1,\ldots, s\, .
$$
 Moreover,  since $\mathfrak{J}$ is an abelian ideal and $\omega$ makes $\mathfrak{J}$ and  $\mathfrak{h}$  orthogonal, we have:
 $$
 \rho_C(Z_i,\bar W_j)=0\,, \quad \mbox{for all } i=1,\ldots, r\,,\quad  j=1,\ldots, s\,.
 $$
Moreover we have 
$$
 \omega([[Z_i,\bar Z_i]^{0,1},Z_a],\bar Z_a)= \frac{\sqrt{-1}}{4}\delta_{ia}\, ,\quad 
\omega([[Z_i,\bar Z_i]^{1,0},\bar Z_a], Z_a)=\frac{\sqrt{-1}}{4}\delta_{ia}
 $$
and
 $$
 \omega([[Z_i,\bar Z_i]^{0,1},W_b],\bar W_b)= \frac12\lambda_{ib}\,, \quad 
 \omega([[Z_i,\bar Z_i]^{1,0},\bar W_b],W_b )= -\frac12\bar\lambda_{ib} 
$$
which imply
$$
\rho_C(Z_i,\bar Z_i)=-\sqrt{-1}\left(\frac12+\sum_{b=1}^s\Im\mathfrak{m}(\lambda_{ib})\right)=-\frac{\sqrt{-1}}{4}\,.
$$
and, consequently, 
$$
\rho_C=-\omega_\infty\,.
$$
In general, we have that 
$$
P_i^j=(\rho_C)_{i\bar k }g^{\bar k j }=\begin{cases}-\frac14g^{\bar i j } &\quad \mbox{if }i\in\{1,\ldots, r\}\,,\\
0 &\quad \mbox{otherwise }\,.
\end{cases}
$$
 Then, part \eqref{CRS3} of Theorem \ref{Jorge} readily implies that any left-invariant Hermitian metrics of the form \eqref{CRS} lifts to an expanding algebraic soliton on the universal covering of $M$ with cosmological constant $c=\frac{1}{4A}$. Conversely, let $\omega$ be an algebraic soliton for the Chern-Ricci flow. Then, thanks to part  \eqref{CRS2} of Theorem \ref{Jorge}, we have that 
 $$
 P-cI\in{\rm Der}(\g)\,.
 $$
On the other hand, we can easily see that, if $ D\in{\rm Der}(\g)$, then $\mathfrak h \subseteq\ker D$, see proof of Corollary \ref{Cor 5.4} in the present paper for the details. This readily implies that 
$$
-\frac14g^{i\bar i }=-\frac14g^{\bar j j}=c\,, \quad \mbox{ for all }i,j=1,\ldots, r\,, \quad g^{\bar i j }=0\,, \quad \mbox{for all }i \in\{1,\ldots, r\}\,,\, j\ne  i \,,
$$
from which the claim follows.

Moreover, the Chern-Ricci flow evolves an arbitrary left-invariant Hermitian metric $\omega$ as $\omega_t=\omega+t\omega_\infty$
and $\frac{\omega_t}{1+t}\to\omega_\infty$ as  $t\to \infty$.
In order to obtain  the claim regarding the Gromov-Hausdorff convergence, we show that $\frac{\omega_t}{1+t}$ satisfies conditions 1,2,3 in Proposition \ref{GH}. Here we 
denote by $|\cdot|_t$ the norm induced by $\omega_t$.  

\smallskip
Condition 2 is trivially satisfied since $\omega_{t|\mathfrak I\oplus \mathfrak I}=\omega_0$, for every $t\geq 0$,  and 
$$
L_t(\gamma)=\frac{1}{\sqrt{1+t}}L_{0}(\gamma)\,,
$$
for every curve $\gamma$ in $M$ tangent to $\ker\omega_\infty$. 

\smallskip 
On the other hand,  for a vector $v\in \mathfrak h$, we have 
$$
\frac{1}{\sqrt{1+t}}|v|_t\leq C|v|_{0}\,,  
$$
for a constant $C>0$ independent on $v$. This, together with condition 2, guarantees  condition 1. 

\smallskip

In order to prove condition 3, let $\epsilon, \ell >0$ and $T>0$ be such that 
$$
\left\vert \frac{|v|_t}{\sqrt{1+t}}-|v|_\infty\right\vert\leq \frac{\epsilon}{\ell}\,, 
$$
for every $v\in \mathfrak h$ and $t\geq T$. Let $\gamma$ be a curve in $M$ tangent to $\mathcal H$ which is parametrized by arclength with respect to $\omega_\infty$ and such that $L_{\infty}(\gamma)<\ell$. 
Then
$$
|L_{t}(\gamma)-L_{\infty}(\gamma)|\leq \int_{0}^{b}\left\vert\frac{1}{\sqrt{1+t}}|\dot \gamma |_t-|\dot \gamma|_\infty\right\vert da\leq \frac{\epsilon}{\ell}b\leq \epsilon\,, 
$$
since $b\leq \ell$. 

For the last statement,  we identify  $\omega_t$ with its pull-back onto $\mathbb H^r\times\mathbb C^s$ and  we fix  as base point the identity element of $\mathbb H^r\times\mathbb C^s$. Firstly,  we  observe that the endomorphism $D$ represented with respect to the frame $\{Z_1,\ldots, Z_r, W_1,\ldots, W_s\}$ by the following matrix:
$$
\begin{pmatrix}
0&0\\0& {\rm I}_{\mathfrak J}
\end{pmatrix}
$$ 
is a derivation of $\g$. Moreover, we can construct 
$$
\exp(s(t)D)=\begin{pmatrix}{\rm I}_{\mathfrak{h}}& 0 \\ 0 & e^{s(t)}{\rm I}_{\mathfrak{J}}\end{pmatrix}\in {\rm Aut}(\g, J)\,, \quad \mbox{for every } t\ge 0\,,
$$
where $s(t)=\log(\sqrt{1+t})$  and define the 1-parameter family $\{\varphi_t\}\subseteq{\rm Aut}(\mathbb H^r\times\C ^s, J )$ such that 
$$
d\varphi_t=\exp(s(t)D)\,, \quad \mbox{for every } t\ge 0\, .
$$
Trivially, we see that 
$$
\begin{aligned}
\varphi_t^*\frac{\omega_t}{1+t}(Z_i,\bar Z_j)=&\, \sqrt{-1}\frac{1}{1+t}\left(g_{i\bar j }+\frac t4\delta_{ij}\right )\to \frac{\sqrt{-1}}{4}\delta_{ij} \quad \mbox{as }t\to\infty\,,\\
\varphi_t^*\frac{\omega_t}{1+t}(Z_i,\bar W_j)=&\,\sqrt{-1}\frac{e^{s(t)}}{1+t}g_{i\overline{r+j}}\to  0 \quad \mbox{as }t\to\infty\,,\\
\varphi_t^*\frac{\omega_t}{1+t}(W_i,\bar W_j)=&\,\sqrt{-1}\frac{e^{2s(t)}}{1+t}g_{r+i\overline{r+j}} \to \sqrt{-1}g_{r+i\overline{r+j}} \quad \mbox{as }t\to\infty\,.\\
\end{aligned}
$$
These facts guarantee that 
$$
\varphi_t^*\frac{\omega_t}{1+t}\to \omega_{\infty}+\omega_{\mathfrak{J}\oplus \mathfrak{J}} \quad \mbox{as }t\to\infty\,, 
$$
hence, the assertion follows.
\end{proof}

\section{Proof of the main result}\label{Sec4}
In this section we prove Theorem \ref{main2}.

\medskip 
The 
existence of pluriclosed metrics on Oeljeklaus-Toma manifolds was studied in \cite{A}, \cite{FKV} and \cite{Otiman}. In particular from \cite{A} it follows the following result.  
\begin{theorem}[Corollary 3, \cite{A}]
An Oeljeklaus-Toma manifold of type $(r,s)$ admits a pluriclosed metric if and only if $r=s$ and 
\begin{equation}\label{onsigma}
\sigma_j(u)|\sigma_{r+j}(u)|^2=1\,,\quad  \mbox{ for every } j=1,\dots,s \mbox{ and }u\in U\,. 
\end{equation}
\end{theorem}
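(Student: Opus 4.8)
The plan is to reduce the whole question to the Lie-algebra level and then turn $\partial\bar\partial\omega=0$ into explicit conditions on the structure constants $\lambda_{ki}=\tfrac{\sqrt{-1}}{4}b_{ki}-\tfrac12 c_{ki}$. First I would invoke a symmetrization argument: since $M=G/\Gamma$ is a compact quotient of a solvable Lie group by a lattice (hence $G$ is unimodular and carries a bi-invariant Haar measure) and $J$ is left-invariant, averaging a pluriclosed metric over $M$ against the invariant volume produces a left-invariant form which is again positive, of type $(1,1)$, and $\partial\bar\partial$-closed, because averaging commutes with $d$ and the bidegree decomposition is built from $d$ and the invariant $J$. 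Thus $M$ admits a pluriclosed metric if and only if it admits a left-invariant one, and the problem becomes purely algebraic: to decide for which structure constants the equation $\partial\bar\partial\omega=0$ has a positive solution $\omega=\sqrt{-1}\sum g_{a\bar b}\,\alpha^a\wedge\bar\alpha^b$ in the coframe $\{\omega^1,\dots,\omega^r,\gamma^1,\dots,\gamma^s\}$ of \eqref{eqsstr}.

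The heart of the argument is the computation of $\partial\bar\partial\omega$ from \eqref{eqsstr}. Since $d\gamma^i$ is diagonal in the $\gamma$'s, each application of $\partial$ or $\bar\partial$ to $\gamma^i\wedge\bar\gamma^j$ preserves the factor $\gamma^i\wedge\bar\gamma^j$ and introduces one factor from $\{\omega^k,\bar\omega^k\}$; hence $\partial\bar\partial\omega$ splits into monomials $\omega^a\wedge\bar\omega^b\wedge\gamma^i\wedge\bar\gamma^j$ plus purely horizontal monomials in the $\omega^k,\bar\omega^k$. I would extract the coefficient of $\omega^k\wedge\bar\omega^k\wedge\gamma^i\wedge\bar\gamma^i$: one finds $\bar\partial(\gamma^i\wedge\bar\gamma^i)=-2\sqrt{-1}\sum_m\Im\mathfrak{m}(\lambda_{mi})\,\bar\omega^m\wedge\gamma^i\wedge\bar\gamma^i$, and then applying $\partial$ shows that this coefficient is proportional to $g_{r+i\,\overline{r+i}}\,b_{ki}(1+b_{ki})$ (recall $4\,\Im\mathfrak{m}\,\lambda_{ki}=b_{ki}$). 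As $\omega>0$ forces $g_{r+i\,\overline{r+i}}>0$, its vanishing yields $b_{ki}\in\{0,-1\}$ for all $k,i$. The off-diagonal coefficient of $\omega^l\wedge\bar\omega^m\wedge\gamma^i\wedge\bar\gamma^i$ with $l\ne m$ is proportional to $g_{r+i\,\overline{r+i}}\,b_{li}b_{mi}$, so for each complex place $i$ at most one $b_{\cdot i}$ is nonzero; and the mixed terms with $i\ne j$ give $g_{r+i\,\overline{r+j}}\,C_{kij}=0$, where $C_{kij}$ vanishes for all $k$ only when the $i$-th and $j$-th columns of $(\lambda_{ki})$ coincide, forcing $(g_{r+i\,\overline{r+j}})$ to be diagonal.

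Combining these with the arithmetic relation \eqref{bkappai}, which reads $\sum_i b_{ki}=-1$ for each $k=1,\dots,r$, I conclude that there is exactly one $-1$ in each row and at most one in each column, i.e. $(b_{ki})$ is minus a partial permutation matrix; in particular $r\le s$. Granting that every column is hit, $(b_{ki})$ is minus a genuine permutation matrix, whence $r=s$, and after relabeling the complex places we obtain $b_{ki}=-\delta_{ki}$. Unwinding \eqref{sigmar} and using that $\mathrm{pr}_{\R^r}\circ l(U)$ is a rank-$r$ lattice (so the $\log\sigma_k(u)$ are linearly independent over $U$), the relation $b_{ki}=-\delta_{ki}$ translates precisely into $\sigma_j(u)|\sigma_{r+j}(u)|^2=1$, i.e. \eqref{onsigma}. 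For the converse I would observe that when $r=s$ and \eqref{onsigma} hold the structure equations acquire the diagonal type-$(s,s)$ form displayed in the introduction (with $\Im\mathfrak{m}\,\lambda_{ki}=-\tfrac14\delta_{ik}$), and then check directly that the canonical metric \eqref{omegacan} is pluriclosed, the surviving horizontal and cross coefficients vanishing because $g$ is diagonal.

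I expect the genuine difficulty to be concentrated in the reverse inequality $s\le r$, that is, in excluding a complex place $i_0$ with $b_{ki_0}=0$ for every $k$ (equivalently $|\sigma_{r+i_0}(u)|\equiv 1$ on $U$): the differential-geometric constraints above only deliver the partial-permutation structure and $r\le s$, so pinning down surjectivity, hence $r=s$, is exactly where the non-degeneracy of the Oeljeklaus–Toma arithmetic data must be exploited. A secondary, purely bookkeeping obstacle is to organize the many monomial coefficients of $\partial\bar\partial\omega$ so that the positivity of $\omega$ can be applied cleanly to each of them; once the diagonal coefficient $b_{ki}(1+b_{ki})$ is isolated, however, the dichotomy $b_{ki}\in\{0,-1\}$ drives the entire classification.
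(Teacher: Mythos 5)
Your attempt has a genuine, unfilled gap at the decisive point, and it is worth noting first that the paper itself offers no proof to compare against: the statement is imported verbatim from \cite{A}, and the paper only afterwards extracts the consequences $b_{ki}\in\{0,-1\}$, $b_{ki}b_{li}=0$ from \eqref{onsigma}. The computational half of your proposal is correct and essentially replicates what the paper does later in Proposition \ref{ch}: the averaging reduction to left-invariant metrics is standard and valid on a solvmanifold with left-invariant $J$, and the coefficient extraction is right — for $p=q=i$ one gets coefficients $\tfrac14 b_{\delta i}(1+b_{\delta i})$ on $\omega^{\delta}\wedge\bar\omega^{\delta}\wedge\gamma^i\wedge\bar\gamma^i$ and $\tfrac14 b_{ai}b_{\delta i}$ on $\omega^a\wedge\bar\omega^{\delta}\wedge\gamma^i\wedge\bar\gamma^i$, these monomials arise only from the $\gamma^i\wedge\bar\gamma^i$-term of $\omega$, and positivity ($g_{r+i\,\overline{r+i}}>0$) then forces the dichotomy $b_{ki}\in\{0,-1\}$ with at most one $-1$ per column. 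Together with the row-sum relation \eqref{bkappai} this gives the partial-permutation structure and $r\le s$, and the converse (a diagonal metric is pluriclosed when $b_{ki}=-\delta_{ki}$) is exactly what Proposition \ref{ch} verifies.

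The missing step is the one you flag and then assume with the phrase ``granting that every column is hit'': ruling out a complex place $i_0$ with $b_{ki_0}=0$ for all $k$, which by \eqref{sigmar} and the full-rank property of ${\rm pr}_{\mathbb{R}^r}\circ l(U)$ is equivalent to $|\sigma_{r+i_0}(u)|=1$ for all $u\in U$. This cannot be repaired by squeezing more information out of $\partial\bar\partial\omega=0$: if such arithmetic data existed with $r<s$, a diagonal left-invariant metric would satisfy every constraint you derived (the diagonal coefficients vanish on the partial-permutation locus, and the conditions coming from \eqref{222} and \eqref{333} for $p\neq q$ are met by taking the off-diagonal entries $B_{p\bar q}$ and the $C_{p\bar q}$ to be zero), so the manifold would carry a pluriclosed metric in type $(r,s)$ with $r<s$. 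Hence the equality $r=s$ is irreducibly number-theoretic: one must prove that no admissible rank-$r$ subgroup $U$ consists of units of modulus one at a fixed complex place. Dimension counting does not do it — the units with $|\sigma_{r+i_0}(u)|=1$ have logarithmic image in a subspace of dimension $r+s-2\ge r$ when $s\ge 2$, so a priori they could contain an admissible $U$ — and this exclusion is precisely the arithmetic input (the Dubickas-type result on units) supplied in \cite{A}. As written, your proposal proves ``pluriclosed $\Rightarrow$ $r\le s$ together with \eqref{onsigma} on the hit columns'' and the converse, but not the full statement.
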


Condition \eqref{onsigma} in the previous Theorem can be rewritten in terms of the structure constants appearing in \eqref{eqsstr}. Indeed,  \eqref{eqsstr} together with \eqref{onsigma} forces 
 $b_{ki}\in \{0,-1\}$  and $b_{ki}b_{li}=0$,  for every $i,k,l=1,\dots, s$ with 
$k\neq l$. In particular, using \eqref{bkappai}, for every fixed index  $k\in\{1,\ldots, s\}$, there exists a unique  $i_k\in \{1,\ldots, s\}$ such that 
$$
b_{ki_k}=-1\,,\quad b_{ki}=0\,,
$$
for all $i\ne i_k$ and, if  $k\ne l ,$ then $i_k\ne i_l$. Hence, up to a reorder of the $\gamma_j$'s, we may and do assume,  without loss of generality, $i_k=k$,  for every $k\in\{1,\dots,s\}$,
i.e.  
\begin{equation}
\label{lambdas}
 \lambda_{ki}=\begin{cases}-\frac12 c_{ki} \quad &\mbox {if } i\ne k\,, \\ 
 -\frac12 c_{kk}-\frac{\sqrt{-1}}{4}\quad & \mbox{if } i=k\,.\end{cases}
 \end{equation}

\begin{prop}[Characterization of left-invariant pluriclosed metrics on  Oeljeklaus-Toma manifolds]\label{ch}
A left-invariant metric $\omega$ on  an Oeljeklaus-Toma manifold admitting pluriclosed metrics is pluriclosed if and only if  it  takes the following expression with respect to a coframe $\{\omega^1,\dots,\omega^s,\gamma^1,\dots,\gamma^s\}$ satisfying \eqref{eqsstr} and \eqref{lambdas}:
\begin{equation}\label{gPC}
\omega=\sqrt{-1}\sum_{i=1}^sA_i\omega^i\wedge \bar \omega^i+ B_i\gamma^i\wedge \bar \gamma^i+ \sqrt{-1}\sum_{r=1}^{k}\left( C_{r}\omega^{p_r}\wedge \bar \gamma^{p_r}+\bar C_{r}\gamma^{p_r}\wedge\bar \omega^{p_r} 
\right)
\end{equation}
for some $A_1,\dots,A_s,B_1,\dots,B_s\in \mathbb R_+$,  $C_1,\dots,C_k\in 	\mathbb C$, where $\{p_1,\dots,p_k\}\subseteq \{1,\dots,s\}$ are such that 
$$
\lambda_{jp_i}=0\, ,\mbox{ for all }j\neq p_i\, , \mbox{ for all } i=1,\dots,k\,. 
$$ 
\end{prop}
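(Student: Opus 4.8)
The plan is to impose $\partial\bar\partial\omega=0$ directly on a general left-invariant real $(1,1)$-form and to read off the resulting linear conditions. Writing $\{\theta^A\}=\{\omega^1,\dots,\omega^s,\gamma^1,\dots,\gamma^s\}$, every such $\omega$ is $\sqrt{-1}\sum_{A,B}g_{A\bar B}\,\theta^A\wedge\bar\theta^B$ with $(g_{A\bar B})$ Hermitian, and I would split it into an $\omega\bar\omega$-block $\sum_{i,j}g_{i\bar j}\,\omega^i\wedge\bar\omega^j$, a $\gamma\bar\gamma$-block $\sum_{i,j}h_{i\bar j}\,\gamma^i\wedge\bar\gamma^j$, and a mixed block $\sum_{i,j}\big(a_{i\bar j}\,\omega^i\wedge\bar\gamma^j+\overline{a_{i\bar j}}\,\gamma^j\wedge\bar\omega^i\big)$. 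Since $\partial\bar\partial$ is linear, it is enough to evaluate it on the three monomials $\omega^i\wedge\bar\omega^j$, $\gamma^i\wedge\bar\gamma^j$, $\omega^i\wedge\bar\gamma^j$. To do so I would first record, from \eqref{eqsstr} and \eqref{lambdas}, the action of $\partial,\bar\partial$ on the coframe: $\partial\omega^k=0$, $\bar\partial\omega^k=\tfrac{\sqrt{-1}}{2}\omega^k\wedge\bar\omega^k$, $\partial\gamma^i=\sum_k\lambda_{ki}\,\omega^k\wedge\gamma^i$, $\bar\partial\gamma^i=-\sum_k\lambda_{ki}\,\bar\omega^k\wedge\gamma^i$, together with their conjugates, and then apply the Leibniz rule twice.

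The observation that keeps the bookkeeping manageable is that the three blocks produce three families of $(2,2)$-forms that never interfere: the $\omega\bar\omega$-block contributes forms in $\omega,\bar\omega$ only, the $\gamma\bar\gamma$-block forms carrying exactly one $\gamma$ and one $\bar\gamma$, and the mixed block forms carrying a single $\bar\gamma$ (resp. $\gamma$). Thus the three sets of equations decouple and may be solved separately. In the $\omega\bar\omega$-block one finds $\partial\bar\partial(\omega^i\wedge\bar\omega^j)=-\tfrac14\,\omega^i\wedge\bar\omega^i\wedge\omega^j\wedge\bar\omega^j$; the diagonal monomials $\omega^i\wedge\bar\omega^i$ are already $d$-closed, while requiring the off-diagonal contributions to cancel forces $g_{i\bar j}=0$ for $i\neq j$, leaving $\sqrt{-1}\sum_i A_i\,\omega^i\wedge\bar\omega^i$. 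In the $\gamma\bar\gamma$-block a direct computation gives $\partial\bar\partial(\gamma^i\wedge\bar\gamma^j)=\sum_{k,m}(\bar\lambda_{kj}-\lambda_{ki})\big(\lambda_{mi}-\bar\lambda_{mj}+\tfrac{\sqrt{-1}}{2}\delta_{mk}\big)\,\omega^m\wedge\bar\omega^k\wedge\gamma^i\wedge\bar\gamma^j$; by $\Im\mathfrak m\,\lambda_{kk}=-\tfrac14$ the case $i=j$ vanishes identically, whereas for $i\neq j$ the coefficient of the term $m=k=i$ reduces to a strictly negative real number, so no cancellation can occur and $h_{i\bar j}=0$ for $i\neq j$. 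This yields $\sqrt{-1}\sum_i B_i\,\gamma^i\wedge\bar\gamma^i$.

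The delicate step, which I expect to be the main obstacle, is the mixed block. Expanding $\partial\bar\partial(\omega^i\wedge\bar\gamma^j)$ via the structure equations produces a combination of forms $\omega^i\wedge\omega^a\wedge\bar\omega^b\wedge\bar\gamma^j$ whose coefficients are polynomials in the constants $\bar\lambda_{kj}$, and every such coefficient carries at least one factor $\bar\lambda_{kj}$. When $j=p_r$ is an index whose column is diagonal, i.e. $\lambda_{kj}=0$ for all $k\neq j$, the surviving terms all contain a factor $\omega^i\wedge\omega^j$ and hence vanish for $i=j$, so that $\partial\bar\partial(\omega^{p_r}\wedge\bar\gamma^{p_r})=0$ and the diagonal mixed term persists. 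For all remaining pairs $(i,j)$ I would gather, for each independent basis form $\omega^i\wedge\omega^a\wedge\bar\omega^b\wedge\bar\gamma^j$, the contributions of all the $a_{i'\bar j}$ into a linear system and show—again through $\Im\mathfrak m\,\lambda_{jj}=-\tfrac14\neq0$—that the associated obstruction matrix is non-degenerate, forcing those $a_{i\bar j}$ to vanish. Tracking the reordering signs and verifying this non-degeneracy is the crux of the computation. Assembling the surviving terms from the three blocks reproduces exactly \eqref{gPC}, and the converse is immediate: each of $\omega^i\wedge\bar\omega^i$, $\gamma^i\wedge\bar\gamma^i$ and $\omega^{p_r}\wedge\bar\gamma^{p_r}$ is annihilated by $\partial\bar\partial$, so every form \eqref{gPC} is pluriclosed.
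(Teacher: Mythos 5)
Your proposal reproduces the architecture of the paper's proof exactly: the same splitting of a general Hermitian left-invariant $(1,1)$-form into the three blocks, the same decoupling observation (the paper's \eqref{3}), and the same computations for the two pure blocks, including the closed formula for $\partial\bar\partial(\gamma^i\wedge\bar\gamma^j)$ and the strictly negative real coefficient that kills the off-diagonal $B$'s. However, the proposal is incomplete precisely at the step you yourself call the crux. For the mixed block the paper does not stop at a plan: it computes $\partial\bar\partial(\omega^p\wedge\bar\gamma^q)$ in closed form, rewrites condition \eqref{333} as $C_{p\bar q}\,E_{p\bar q}=0$, where $E_{p\bar q}$ is an explicit left-invariant $2$-form whose coefficients are quadratic in the $\bar\lambda_{\delta q}$, and then shows, using $\lambda_{pq}\neq\pm\tfrac{\sqrt{-1}}{2}$ and $\lambda_{qq}\neq 0$, that $E_{p\bar q}$ vanishes if and only if $\lambda_{\delta q}=0$ for all $\delta\neq p$; hence $E_{p\bar q}\neq 0$ for $p\neq q$, while $E_{p\bar p}=0$ exactly when $c_{\delta p}=0$ for all $\delta\neq p$. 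This analysis is what singles out the admissible indices $p_1,\dots,p_k$ in \eqref{gPC}, i.e.\ the heart of the statement; in your write-up it is replaced by the unproven assertion that a certain ``obstruction matrix'' is non-degenerate. Your instinct that the coefficients $a_{i\bar j}$ with the same $j$ couple through common basis forms is sound (for instance $\omega^u\wedge\omega^v\wedge\bar\omega^u\wedge\bar\gamma^j$ receives contributions from both $a_{u\bar j}$ and $a_{v\bar j}$), so this non-degeneracy genuinely requires an argument; until it is supplied, the ``only if'' direction is not proved.

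A second, subtler point concerns the $\omega\bar\omega$-block. Your formula $\partial\bar\partial(\omega^i\wedge\bar\omega^j)=-\tfrac14\,\omega^i\wedge\bar\omega^i\wedge\omega^j\wedge\bar\omega^j$ is correct, but this $(2,2)$-form is \emph{symmetric} under $i\leftrightarrow j$, so in the Hermitian sum the conjugate pair $g_{i\bar j}$, $g_{j\bar i}=\overline{g_{i\bar j}}$ multiplies one and the same basis form, contributing $2\,\Re\mathfrak{e}\,g_{i\bar j}$: your cancellation argument therefore only forces $\Re\mathfrak{e}\,g_{i\bar j}=0$, not $g_{i\bar j}=0$. (No such collision occurs in the $\gamma\bar\gamma$-block, since $\gamma^i\wedge\bar\gamma^j$ and $\gamma^j\wedge\bar\gamma^i$ generate independent families of $(2,2)$-forms, which is why your argument there is fine.) The paper draws the same conclusion $A_{p\bar q}=0$ from the same computation without further comment, so you are matching its proof at this point; but as written your step does not by itself deliver the vanishing of the imaginary parts that the normal form \eqref{gPC} requires, and you should either supply an additional argument for them or explicitly flag the issue.
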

\begin{proof}
We assume $s>1$ since the case $s=1$ is trivial.
Let
   \[\omega=\sqrt{-1}\sum_{p,q=1}^sA_{p\bar q}\omega^p\wedge\bar{\omega}^q+B_{p\bar q}\gamma^p\wedge\bar{\gamma}^q+C_{p\bar q}\omega^p\wedge\bar{\gamma}^q+\bar C_{ p\bar q} \gamma^q\wedge \bar{\omega}^p\]
be an arbitrary real left-invariant $(1,1)$-form on $M$, with $A_{p\bar p}, B_{p\bar p}\in \R$, for every $p=1,\ldots, s$,  $A_{p\bar q}, B_{p\bar q}\in \C$, for all $p,q=1,\ldots, s$ with $ p\ne q$, and $ C_{p\bar q}\in \C$,  for every $p,q=1,\ldots, s$. 

From the structure equations \eqref{eqsstr},  it easily follows 
\begin{equation}\label{3}
\begin{cases}
& \partial\bar\partial(\omega^p\wedge\bar\omega^q)\in \langle \omega^{p}\wedge \omega^p\wedge \bar \omega^p\wedge \bar\omega ^q \rangle \\ 
& \partial\bar\partial(\omega^p\wedge\bar\gamma^q)\in \langle \omega^{i}\wedge \omega^j\wedge \bar \omega^l\wedge \bar\gamma^m\rangle \\ 
& \partial\bar\partial(\gamma^p\wedge\bar\gamma^q)\in \langle \omega^{i}\wedge\bar  \omega^j\wedge\gamma^l\wedge \bar\gamma^m\rangle \\ 
\end{cases}
\end{equation}
and that $\omega$ is pluriclosed if and only if the following three conditions are satisfied 
\begin{eqnarray}
&& \sum_{p,q=1}^sA_{p\bar q}\partial\bar{\partial}(\omega^p\wedge\bar{\omega}^q)=0 \label{111}\,;\\ 
&& \sum_{p,q=1}^sB_{p\bar q}\partial\bar{\partial}(\gamma^p\wedge\bar{\gamma}^q)=0 \label{222}\,;\\ 
&& \sum_{p,q=1}^sC_{p\bar q}\partial\bar{\partial}(\omega^p\wedge\bar{\gamma}^q)=0 \label{333}\,.
 \end{eqnarray}
The first relation in \eqref{3} yields that \eqref{111} is satisfied if and only if  
$$
  A_{p\bar q}=0\,, \mbox{ for all }p\neq q\,.
$$

Next we focus on \eqref{222}.
We have
$$
\begin{aligned}
\partial\bar\partial(\gamma^p\wedge\bar{\gamma}^q)=&
\partial\left(-\sum_{\delta=1}^s\lambda_{\delta p}\bar{\omega}^{\delta}\wedge\gamma^p\wedge\bar{\gamma}^q-\gamma^p\wedge\sum_{\delta=1}^s\bar\lambda_{\delta q}\bar{\omega}^{\delta}\wedge\bar{\gamma}^q\right)\,
\end{aligned}
$$
and 
$$
\begin{aligned}
\partial\bar\partial(\gamma^p\wedge\bar{\gamma}^q)=&\,
\sum_{\delta=1}^s(\bar\lambda_{\delta q}-\lambda_{\delta p})\left(\partial\bar{\omega}^{\delta}\wedge\gamma^p\wedge\bar{\gamma}^q-\bar{\omega}^{\delta}\wedge\partial\gamma^p\wedge\bar{\gamma}^q+\bar{\omega}^{\delta}\wedge \gamma^p\wedge \partial\bar{\gamma}^q\right)   
\end{aligned}
$$
which implies 
$$
\begin{aligned}
\partial\bar\partial(\gamma^p\wedge\bar{\gamma}^q)=&\, \sum_{\delta=1}^s\frac{\sqrt{-1}}{2}(\bar\lambda_{\delta q}-\lambda_{\delta p})\omega^{\delta}\wedge\bar{\omega}^{\delta}\wedge\gamma^p\wedge\bar{\gamma}^q-\sum_{\delta=1}^s(\bar\lambda_{\delta q}-\lambda_{\delta p})\bar{\omega}^{\delta}\wedge\left(\sum_{a=1}^s\lambda_{ap}\omega^a\wedge\gamma^p\right)\wedge\bar{\gamma}^q\\
&\,+\sum_{\delta=1}^s(\bar\lambda_{\delta q}-\lambda_{\delta p})\bar{\omega}^{\delta}\wedge\gamma^{p}\wedge\left(-\sum_{a=1}^s\bar\lambda_{aq}\omega^a\wedge\bar{\gamma}^q\right)\\
=&\sum_{\delta=1}^s\frac{\sqrt{-1}}{2}(\bar\lambda_{\delta q}-\lambda_{\delta p})\omega^{\delta}\wedge\bar{\omega}^{\delta}\wedge{\gamma^p}\wedge\bar{\gamma}^q+\sum_{\delta, a}(\lambda_{ap}-\bar\lambda_{aq})(\bar\lambda_{\delta q}-\lambda_{\delta p})\omega^a\wedge\bar{\omega}^{\delta}\wedge\gamma^p\wedge\bar{\gamma}^q\,.
\end{aligned}
$$
Finally, we get 
\begin{multline*}
\partial\bar\partial(\gamma^p\wedge\bar{\gamma}^q)=\sum_{\delta=1}^s(\bar\lambda_{\delta q}-\lambda_{\delta p})\left(\frac{\sqrt{-1}}{2}+\lambda_{\delta p}-\bar\lambda_{\delta q}\right)\omega^{\delta}\wedge\bar{\omega}^{\delta}\wedge{\gamma^p}\wedge\bar{\gamma}^q\\+\sum_{\delta\ne a}(\lambda_{ap}
-\bar\lambda_{aq})(\bar\lambda_{\delta q}-\lambda_{\delta p})\omega^a\wedge\bar{\omega}^{\delta}\wedge\gamma^p\wedge\bar{\gamma}^q
\end{multline*}
and that condition \eqref{222} is equivalent to
\begin{equation*}
B_{p\bar q}\left(\sum_{\delta=1}^s(\bar\lambda_{\delta q}-\lambda_{\delta p})\left(\frac{\sqrt{-1}}{2}
+\lambda_{\delta p}-\bar\lambda_{\delta q}\right)\omega^{\delta}\wedge\bar{\omega}^{\delta}
+\sum_{\delta\ne a}(\lambda_{ap}-\bar\lambda_{aq})(\bar\lambda_{\delta q}-\lambda_{\delta p})\omega^a\wedge\bar{\omega}^{\delta}\right)=0\,,
\end{equation*}
 for every $p,q=1,\dots, s\,.$

By using our conditions on the $b_{ki}$'s, it is easy to show that the quantity 
$$
\sum_{\delta=1}^s(\bar\lambda_{\delta q}-\lambda_{\delta p})\left(\frac{\sqrt{-1}}{2}
+\lambda_{\delta p}-\bar\lambda_{\delta q}\right)\omega^{\delta}\wedge\bar{\omega}^{\delta}
+\sum_{\delta\ne a}(\lambda_{ap}-\bar\lambda_{aq})(\bar\lambda_{\delta q}-\lambda_{\delta p})\omega^a\wedge\bar{\omega}^{\delta}
$$
is vanishing for $p=q$ and, consequently, there are no restrictions on the $B_{q\bar q}$'s. Now we observe that the real part of 
$$
(\bar\lambda_{p q}-\lambda_{p p})\left(\frac{\sqrt{-1}}{2}
+\lambda_{p p}-\bar\lambda_{p q}\right)
$$
is different from $0$, for every $p,q$ with $p\neq q$, which forces $B_{p\bar q}=0$, for $p\neq q$. Indeed, we have 
\begin{eqnarray*}
&&\bar \lambda_{\delta q}-\lambda_{\delta p}=\frac12 (c_{\delta p}-c_{\delta q})-\frac{\sqrt{-1}}{4}(b_{\delta p}+b_{\delta q})\,, \\
&& \frac{\sqrt{-1}}{2}+\lambda_{\delta p}-\bar\lambda_{\delta q}=-\frac12(c_{\delta p}-c_{\delta q})+\frac{\sqrt{-1}}{2}\left(1+\frac{b_{\delta p}+b_{\delta q}}{2}\right)\\
\end{eqnarray*}
which implies   
\begin{equation}\label{rotula}
\Re\mathfrak e\,\left((\bar \lambda_{\delta q}-\lambda_{\delta p})\left(\frac{\sqrt{-1}}{2}+\lambda_{\delta q}-\bar \lambda_{\delta p}\right)\right)=-\frac{(c_{\delta p}-c_{\delta q})^2}{4}+\frac14\left(\frac{b_{\delta p}+b_{\delta q}}{2}\right)\left(1+\frac{b_{\delta p}+b_{\delta q}}{2}\right)\,.
\end{equation}
Since  $p\ne q$, we have   
 $$
 b_{p p}=-1\,,\quad  b_{p q}=0\, ,
 $$ 
and so \eqref{rotula} computed for $\delta=q$ gives
$$
\Re\mathfrak e\left(\left(\bar \lambda_{p q}-\lambda_{p p}\right)\left(\frac{\sqrt{-1}}{2}+\lambda_{p q}-\bar \lambda_{p p})\right)\right)=\frac14\left( -(c_{p p}-c_{p q})^2-\frac14\right)\ne 0\,,
$$ 
as required. Therefore equation \eqref{222} is satisfied if and only if
$$
  B_{p\bar q}=0\,, \mbox{ for all }p\neq q\,.
$$

Next we focus on \eqref{333}. We have
$$
\begin{aligned}
\partial\bar{\partial}(\omega^p\wedge\bar{\gamma}^q)=&\,
\partial\left(\frac{\sqrt{-1}}{2}\omega^p\wedge\bar{\omega}^p\wedge\bar{\gamma}^q-\omega^p\wedge\left(\sum_{\delta=1}^s\bar \lambda_{\delta q}\bar{\omega}^{\delta}\wedge \bar{\gamma}^q\right)\right)\\
\end{aligned}
$$
and    
$$
\begin{aligned}
\partial\bar{\partial}(\omega^p\wedge\bar{\gamma}^q)=&\,
\frac{\sqrt{-1}}{2}\left(-\frac{\sqrt{-1}}{2}\omega^p\wedge\omega^p\wedge\bar{\omega}^p\wedge\bar{\gamma}^q+\omega^p\wedge\bar{\omega}^p\wedge\left(-\sum_{\delta=1}^s\bar {\lambda}_{\delta q}\omega^{\delta}\wedge\bar{\gamma}^q\right)\right)\\
&\,+\sum_{\delta=1}^s\frac{\sqrt{-1}}{2}\bar \lambda_{\delta q}\omega^p\wedge\omega^{\delta}\wedge\bar{\omega}^{\delta}\wedge\bar{\gamma}^q+\sum_{\delta=1}^s\bar \lambda_{\delta q}\omega^p\wedge\bar{\omega}^{\delta}\wedge\left(\sum_{a=1}^s\bar \lambda_{aq}\omega^a\wedge\bar{\gamma}^q\right)\,.
\end{aligned}
$$
Hence we get 
\begin{multline*}
\partial\bar{\partial}(\omega^p\wedge\bar{\gamma}^q)=
\sum_{\substack{\delta=1\\ \delta\ne p}}^s\frac{\sqrt{-1}}{2}\bar\lambda_{\delta q}\omega^p\wedge\bar{\omega}^p\wedge\omega^{\delta}\wedge\bar\gamma^q+\sum_{\substack{\delta=1\\ \delta\ne p}}^s\frac{\sqrt{-1}}{2}\bar \lambda_{\delta q}\omega^p\wedge\omega^{\delta}\wedge\bar{\omega}^{\delta}\wedge\bar{\gamma}^q\\ +\sum_{\substack{\delta, a \\ a\ne p}}\bar \lambda_{\delta q}\bar \lambda_{aq}\omega^p\wedge\bar{\omega}^{\delta}\wedge\omega^a\wedge\bar{\gamma}^q
\end{multline*}

and 
\begin{multline*}
\partial\bar{\partial}(\omega^p\wedge\bar{\gamma}^q)=\sum_{\substack{\delta=1\\ \delta\ne p}}^s\frac{\sqrt{-1}}{2}\bar\lambda_{\delta q}\omega^p\wedge\bar{\omega}^p\wedge\omega^{\delta}\wedge\bar\gamma^q+\sum_{\substack{a=1\\ a\ne p }}^s\bar\lambda_{pq}\bar\lambda_{aq}\omega^p\wedge\bar{\omega}^{p}\wedge\omega^a\wedge\bar{\gamma}^q\\+\sum_{\substack{\delta=1\\ \delta\ne p}}^s\frac{\sqrt{-1}}{2}\bar\lambda_{\delta q}\omega^p\wedge\omega^{\delta}\wedge\bar{\omega}^{\delta}\wedge\bar{\gamma}^q+\sum_{\substack{\delta, a \\ \delta\ne p \\ a\ne p}}\bar\lambda_{\delta q}\bar\lambda_{aq}\omega^p\wedge\bar{\omega}^{\delta}\wedge\omega^a\wedge\bar{\gamma}^q\,.
\end{multline*}

Therefore
\begin{multline*}
\partial\bar{\partial}(\omega^p\wedge\bar{\gamma}^q)=\sum_{\substack{\delta=1\\ \delta\ne p}}^s\bar\lambda_{\delta q}\left(\frac{\sqrt{-1}}{2}+\bar\lambda_{pq}\right)\omega^p\wedge\bar{\omega}^p\wedge\omega^{\delta}\wedge\bar\gamma^q+\\
\sum_{\substack{\delta=1\\ \delta\ne p}}^s\bar\lambda_{\delta q}\left(\frac{\sqrt{-1}}{2}-\bar\lambda_{\delta q}\right)\omega^p\wedge\omega^{\delta}\wedge\bar{\omega}^{\delta}\wedge\bar{\gamma}^q+\sum_{\substack{\delta\ne a \\ \delta\ne p \\ a\ne p}}\bar\lambda_{\delta q}\bar\lambda_{aq}\omega^p\wedge\bar{\omega}^{\delta}\wedge\omega^a\wedge\bar{\gamma}^q
\end{multline*}
and \eqref{333} is equivalent to 
$$
C_{p\bar{q}}\left(\sum_{\substack{\delta=1\\ \delta\ne p}}^s\bar\lambda_{\delta q}\left(\frac{\sqrt{-1}}{2}+\bar\lambda_{pq}\right)\bar{\omega}^p\wedge\omega^{\delta}
+\sum_{\substack{\delta=1\\ \delta\ne p}}^s\bar\lambda_{\delta q}\left(\frac{\sqrt{-1}}{2}-\bar\lambda_{\delta q}\right)\omega^{\delta}\wedge\bar{\omega}^{\delta}
+\sum_{\substack{\delta\ne a \\ \delta\ne p \\ a\ne p}}\bar\lambda_{\delta q}\bar\lambda_{aq}\bar{\omega}^{\delta}\wedge\omega^a\right)=0\,, 
$$
for every $p,q=1,\dots,s$. Since 
$$
\lambda_{pq}\neq \pm \frac{\sqrt{-1}}{2}\, ,\quad \mbox {for all }\,\,p,q=1,\dots,s\, ,
$$
the quantity 
$$
E_{p\bar q}:=\sum_{\substack{\delta=1\\ \delta\ne p}}^s\bar\lambda_{\delta q}\left(\frac{\sqrt{-1}}{2}+\bar\lambda_{pq}\right)\bar{\omega}^p\wedge\omega^{\delta}
+\sum_{\substack{\delta=1\\ \delta\ne p}}^s\bar\lambda_{\delta q}\left(\frac{\sqrt{-1}}{2}-\bar\lambda_{\delta q}\right)\omega^{\delta}\wedge\bar{\omega}^{\delta}
+\sum_{\substack{\delta\ne a \\ \delta\ne p \\ a\ne p}}\bar\lambda_{\delta q}\bar\lambda_{aq}\bar{\omega}^{\delta}\wedge\omega^a
$$
is vanishing if and only if    
$$
\lambda_{\delta q}=0\, , \quad \mbox{ for all } \delta\ne p\,.
$$
Since $\lambda_{qq}\neq 0$, it follows 
$$
E_{p\bar q}\neq 0\,,\quad \mbox{ for every $p,q$ with $p\neq q$}
$$
and 
$$
E_{p\bar p}=0\mbox{ if and only if $c_{\delta p}=0$\,, for all $\delta\neq p$}\, . 
$$

Hence the claim follows. 
\end{proof}

\begin{prop}\label{prop4.3}
Let 
\begin{equation}\label{palpebra}
\omega=\sqrt{-1}\sum_{i=1}^sA_i\omega^i\wedge \bar \omega^i+ B_i\gamma^i\wedge \bar \gamma^i+ \sqrt{-1}\sum_{r=1}^{k}\left( C_{r}\omega^{p_r}\wedge \bar \gamma^{p_r}+\bar C_{r}\gamma^{p_r}\wedge \bar \omega^{p_r} 
\right)
\end{equation}
be a left-invariant pluriclosed Hermitian metric 
on an Oeljeklaus-Toma manifold, where the components are with respect  to a coframe $\{\omega^1,\dots,\omega^s,\gamma^1,\dots,\gamma^s\}$ satisfying \eqref{eqsstr} and \eqref{lambdas}
and $\{p_1,\dots,p_k\}\subseteq \{1,\dots,s\}$ are such that 
$$
\lambda_{jp_i}=0\, ,\mbox{ for all }j\neq p_i\, , \mbox{ for all } i=1,\dots,k\,. 
$$ 
Then, the $(1,1)$-part of the Bismut-Ricci form of $\omega$ takes the following expression:
$$
\rho^{1,1}_B=-\sqrt{-1}\sum_{r=1}^k\frac{3}{4}\left(1+\frac{\lvert C_{r}\rvert^2}{A_{p_r}B_{p_r}-\lvert C_{r}\rvert^2}\right)\omega^{p_r}\wedge \bar \omega^{p_r}-\sqrt{-1}\sum_{i\not\in\{p_1,\ldots, p_k\}} \frac{3}{4}\omega^i\wedge\bar{\omega}^i 
$$
$$
-\sqrt{-1}\sum_{r=1}^k\left(-\frac{3}{16}-\frac{c_{p_rp_r}^2}{4}-\frac{\sqrt{-1}c_{p_rp_r}}{4}\right)\frac{B_{p_r}C_r}{A_{p_r}B_{p_r}-|C_r|^2}\omega^{p_r}\wedge \bar \gamma^{p_r} + \quad \mbox{{\em conjugates}}\, .
$$
\end{prop}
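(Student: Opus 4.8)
The plan is to reduce the computation of $\rho^{1,1}_B$ to pure Lie-algebra data, exactly as was done for $\rho_C$ in the proof of Proposition \ref{main1} via formula \eqref{rhoC}. First I would record an algebraic expression for the Bismut--Ricci form of a left-invariant Hermitian metric on a Lie group with left-invariant complex structure: since the Bismut connection is determined on $\g$ by Koszul's formula for the Levi-Civita connection corrected by half of the torsion $3$-form characterized by the requirement that $\omega(T^B(\cdot,\cdot),J\cdot)$ be totally skew, both its curvature and the trace defining $\rho_B$ become polynomial expressions in the structure constants of $\g$ and in the (inverse) metric. Taking the $(1,1)$-projection, only the brackets $[Z_k,\bar Z_k]$, $[Z_k,W_i]$ and $[Z_k,\bar W_i]$ read off from \eqref{eqsstr} and \eqref{lambdas} can contribute.

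Next I would invert the metric \eqref{palpebra}. With respect to the frame $\{Z_1,\dots,Z_s,W_1,\dots,W_s\}$ dual to $\{\omega^i,\gamma^i\}$, the metric is block diagonal: for each index $p_r$ carrying a mixed term it has a $2\times 2$ block $\left(\begin{smallmatrix} A_{p_r} & C_r \\ \bar C_r & B_{p_r}\end{smallmatrix}\right)$ coupling $Z_{p_r}$ and $W_{p_r}$, while every remaining direction is orthogonal with norms $A_i$, $B_i$. Inverting the $2\times 2$ blocks produces exactly the denominators $A_{p_r}B_{p_r}-|C_r|^2$ together with the entries $\tfrac{B_{p_r}}{A_{p_r}B_{p_r}-|C_r|^2}$ and $-\tfrac{C_r}{A_{p_r}B_{p_r}-|C_r|^2}$ visible in the statement; this is the source of all the rational coefficients.

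I would then substitute the brackets and the values of $\lambda_{ki}$ from \eqref{lambdas} into the Bismut--Ricci expression and sort the output by type. The pure $\omega^i\wedge\bar\omega^i$ contribution coming from $[Z_i,\bar Z_i]$ reproduces, up to the overall factor $3$ due to the torsion contribution to the Bismut curvature, the Chern value $-\tfrac14$ found in Proposition \ref{main1}, hence the leading $-\tfrac34$ coefficients. The coupling between $Z_{p_r}$ and $W_{p_r}$ then feeds the off-diagonal entries of $g^{-1}$ into the trace, producing on one side the correction $\tfrac{|C_r|^2}{A_{p_r}B_{p_r}-|C_r|^2}$ to the $\omega^{p_r}\wedge\bar\omega^{p_r}$ coefficient, and on the other the genuinely new $\omega^{p_r}\wedge\bar\gamma^{p_r}$ terms, whose coefficient is built from $\lambda_{p_rp_r}=-\tfrac12 c_{p_rp_r}-\tfrac{\sqrt{-1}}{4}$ (in fact it equals $-\lambda_{p_rp_r}^2-\tfrac14$, which expands to $-\tfrac{3}{16}-\tfrac{c_{p_rp_r}^2}{4}-\tfrac{\sqrt{-1}c_{p_rp_r}}{4}$). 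The hypothesis $\lambda_{jp_r}=0$ for $j\neq p_r$ is precisely what confines the $\gamma$-contributions to the single index $p_r$.

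The main obstacle I expect is bookkeeping: the Bismut curvature carries substantially more terms than the Chern one, because of the symmetric part of the connection and the quadratic torsion contribution, so one must carefully track which combinations of $\lambda_{ki}$, $\bar\lambda_{ki}$ and of the inverse-metric entries survive the $(1,1)$-projection and the trace. A parallel check is that all the off-diagonal $\omega^p\wedge\bar\omega^q$ pieces with $p\neq q$ and every $\gamma^p\wedge\bar\gamma^q$ piece cancel, so that $\rho^{1,1}_B$ indeed reduces to the three families in the statement; establishing these cancellations is exactly where the identities \eqref{lambdas}, the vanishing $\lambda_{jp_r}=0$, and the precise block form of $g^{-1}$ must be used in full.
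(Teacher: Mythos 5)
Your proposal takes essentially the same route as the paper's proof: the paper likewise starts from the algebraic trace formula for $\rho_B$ on a Lie group with left-invariant complex structure (its formula \eqref{rhoB}, quoted from \cite{luigiproc} rather than rederived from Koszul plus the torsion correction), inverts the metric in the $2\times 2$ blocks coupling $Z_{p_r}$ and $W_{p_r}$ to obtain exactly the inverse-metric entries you list, and then substitutes \eqref{eqsstr} and \eqref{lambdas} while verifying term by term that the $\omega^p\wedge\bar\omega^q$ components with $p\neq q$ and all $\gamma^p\wedge\bar\gamma^q$ components vanish. Your quantitative claims all check out against the paper's computation, including the compact identity $-\lambda_{p_rp_r}^2-\tfrac14=-\tfrac{3}{16}-\tfrac{c_{p_rp_r}^2}{4}-\tfrac{\sqrt{-1}\,c_{p_rp_r}}{4}$, which coincides with the paper's coefficient $\bar\lambda_{jj}\bigl(\tfrac{\sqrt{-1}}{2}-\bar\lambda_{jj}\bigr)-\bigl(\lvert\lambda_{jj}\rvert^2-\bar\lambda_{jj}^2\bigr)$, with the only cosmetic difference that the paper obtains the diagonal coefficient $-\tfrac34$ as the sum $\tfrac12+\tfrac14$ of two separate traces (over $\mathfrak h$ and over $\mathfrak I$) rather than as your heuristic factor of $3$ on the Chern value.
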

\begin{proof}
We recall that the Bismut-Ricci form of a left-invariant Hermitian metric $\omega=\sqrt{-1}\sum_{a,b=1}^n g_{a\bar b}\alpha^a\wedge \bar \alpha^b $ on a Lie group $G^{2n}$ with a left-invariant complex structure
takes the following algebraic expression: 
\begin{equation}\label{rhoB}
\rho_B(X, Y)=-\sum_{a,b=1}^{n}g^{a\bar b}\omega([[X,Y]^{1,0}, X_a], \bar{X_b})+ g^{\bar a b  }\omega([[X,Y]^{0,1},\bar{X_a}], X_b)+\sqrt{-1} \sum_{a,b=1}^ng^{a\bar b}\omega([X,Y],J[X_a,\bar{X_b}])\, ,
\end{equation}
for every left-invariant vector fields $X,Y$ on $G$, where $\{\alpha^i\}$ is a left-invariant $(1,0)$-coframe with dual frame   $\{X_a\}$ and $(g^{\bar ba})$ is the inverse matrix to $(g_{i\bar j})$  (see e.g. \cite{luigiproc}). We apply \eqref{rhoB} to a left-invariant Hermitian metric on an Oeljeklaus-Toma manifold of the form \eqref{palpebra}.

We have 
\begin{eqnarray*}
g^{\bar i s+i}=\begin{cases} 0 \quad & \mbox{if }i\not\in\{p_1,\ldots, p_k\}\, , \\ -\frac{C_i}{A_iB_i-\lvert C_i\rvert^2}\quad & \mbox{otherwise}\, , 
\end{cases}\quad  g^{\bar i i}=\frac{B_i}{A_iB_i-\lvert C_i\rvert^2}\,, \quad g^{\overline{ s+i}s+i }=\frac{A_i}{A_iB_i-\lvert C_i\rvert^2}
\end{eqnarray*}
and taking into account that the ideal  $\mathfrak{I}$ is abelian, we have 
$$
\rho_B(X,Y)=-\sum_{i=1}^4\rho_i(X,Y)\,,
$$
where 
\begin{eqnarray*}
&& \rho_1(X,Y)=\sum_{a=1}^sg^{a\bar a}(\omega([[X,Y]^{1,0}, Z_a], \bar Z_a)-\frac{ \sqrt{-1}}{2}\omega([X,Y],Z_a-\bar Z_a)+\omega([[X,Y]^{0,1},\bar Z_a],Z_a))\,,\\
&&\rho_2(X,Y)=\sum_{a=1}^sg^{s+a\overline {s+a}}(\omega([[X,Y]^{1,0}, W_a], \bar W_a)+\omega([[X,Y]^{0,1},\bar W_a], W_a))\,,\\
&&\rho_3(X,Y)=\sum_{r=1}^kg^{p_r\overline{s+p_r}}\left(\omega([[X,Y]^{1,0}, Z_{p_r}], \bar W_{p_r})-\omega([X,Y],[Z_{p_r},\bar W_{p_r}])\right)+g^{\overline {p_r} s+p_r  }\omega([[X,Y]^{0,1},\bar Z_{p_r}], W_{p_r})\, ,\\
&&\rho_4(X,Y)=\sum_{r=1}^kg^{s+p_r\bar p_r}\left(\omega([[X,Y]^{1,0}, W_{p_r}], \bar Z_{p_r})+\omega([X,Y],[W_{p_r},\bar Z_{p_r}]))\right)+ g^{\overline{s+p_r} p_r  }\omega([[X,Y]^{0,1},\bar W_{p_r}], Z_{p_r})\, .
\end{eqnarray*}

Next we focus on the computation of $\rho_B(Z_i,\bar Z_j).$ Thanks to \eqref{eqsstr}, we easily obtain that 
$$
\rho_B(Z_i,\bar Z_j)=0\, , \quad \mbox{for every } i,j=1,\ldots, s\, ,\,  \, i\ne j\, .
$$ 
On the other hand,  
$$
\begin{aligned}
\rho_1(Z_i,\bar Z_i)=&
 -\frac{\sqrt{-1}}{2}\sum_{a=1}^sg^{a\bar a}\left(-\frac{\sqrt{-1}}{2}\omega(Z_i+\bar Z_i, Z_a-\bar Z_a)\right)
= \,\frac{\sqrt{-1}}{2}g^{i\bar i}A_i=\frac{\sqrt{-1}}{2}\left(\frac{A_iB_i}{A_iB_i-\lvert C_i\rvert^2}\right).
\end{aligned}
$$
Moreover, we have 
$$
\begin{aligned}
\rho_2(Z_i,\bar Z_i)=&
-\frac{\sqrt{-1}}{2}\sum_{a=1}^sg^{s+a\overline {s+a}} (\omega([Z_i, W_a], \bar W_a)+\omega([\bar Z_i,\bar W_a],W_a)\\
=&-\sqrt{-1}\sum_{a=1}^sg^{s+a\overline {s+a}}\Re\mathfrak e\, \omega([Z_i,W_a], \bar W_a)\,.
\end{aligned}
$$
Using  \eqref{eqsstr}, we have 
$$
\omega([Z_i, W_a], \bar W_a)=
-\sqrt{-1}\lambda_{ia}B_a\, ,
$$
$$
\Re\mathfrak e\, \omega([Z_i,W_a], \bar W_a)=
\frac{B_ab_{ia}}{4}=-\frac{B_a}{4}\delta_{ia}\,.
$$
Then, 
$$
\rho_2(Z_i,\bar Z_i)=\sqrt{-1}\frac{g^{s+i\overline{s+i}}B_i}{4}=\frac{\sqrt{-1}}{4}\frac{A_iB_i}{A_i B_i-\lvert C_i\rvert^2}\,.
$$
Next we observe that 
$$
\rho_3(Z_i,\bar Z_i)+\rho_4(Z_i,\bar Z_i)=0
$$
which implies 
\begin{equation}\label{prima parte}
\rho_B(Z_i,\bar Z_i)=\begin{cases}-\sqrt{-1}\frac{3}{4}\left(1+\frac{\lvert C_{r}\rvert^2}{A_{p_r}B_{p_r}-\lvert C_{r}\rvert^2}\right)\quad & \mbox{if  there exists } r=1,\ldots, k \,\, \mbox{such that } i=p_r\,, \\
 -\sqrt{-1}\frac34 \quad & \mbox{ if } i\not\in \{p_1,\ldots, p_k\}\,.
\end{cases}
\end{equation}

We have 
$$
\begin{aligned}
\rho_3(Z_i,\bar Z_i)=&\sum_{j=1}^kg^{p_j\overline{s+p_j}}\omega([Z_i,\bar Z_i],[Z_{p_j},\bar W_{p_j}])
=-\frac{\sqrt{-1}}{2}\sum_{j=1}^kg^{p_j\overline{s+p_j}}\bar \lambda_{p_jp_j}\omega(Z_i+\bar Z_i, \bar W_{p_j}) \\ 
=& 
\begin{cases} 0\quad & \mbox{if } i\not\in\{p_1,\ldots, p_k \}\,, \\ \frac{1}{2} g^{i\overline{s+i}}\bar \lambda_{ii}C_{i} \quad  &\mbox{otherwise }. 
\end{cases}
\end{aligned}
$$
We compute the three addends in the expression of $\rho_4$ separately:
 \begin{eqnarray*}
 &&\begin{aligned}
 \omega([[Z_i,\bar Z_i]^{1,0}, W_{p_j}], \bar Z_{p_j})=&
 -\frac{1}{2}\lambda_{ip_j}\bar C_{p_j}
 =&\,\begin{cases} 0 \quad & \mbox{if  } i\not\in\{p_1,\ldots, p_k\}\quad \mbox{or }  i\ne p_j\,, \\ -\frac{1}{2}\lambda_{ii}	\bar C_i\quad & \mbox{otherwise }; 
 \end{cases}
 \end{aligned}\\
 &&\begin{aligned}
 \omega([Z_i,\bar Z_i],[W_{p_j},\bar Z_{p_j}])=&
 \frac{1}{2}\lambda_{p_jp_j}g_{\overline{i}s+p_j}
       =&\,\begin{cases}0 \quad& \mbox{if  } i\not\in\{p_1,\ldots, p_k\}\quad  \mbox{or }  \,\, i\ne p_j\,, \\ \frac{1}{2}\lambda_{ii}\bar C_i\quad & \mbox{otherwise }; \end{cases}
\end{aligned}\\
&&\begin{aligned}
\omega([[Z_i,\bar Z_i]^{0,1},\bar W_{p_j}], Z_{p_j})=&
\frac{1}{2}\bar \lambda_{ip_j}g_{\overline{s+p_j}p_j}
=&\begin{cases}0  \quad & \mbox{if } i\ne p_j\,, \\ \frac{1}{2}\bar \lambda_{ii}C_{i} \quad & \mbox{otherwise }. \end{cases}
\end{aligned}
\end{eqnarray*}
 It follows  
 $$
 \rho_3(Z_i,\bar Z_i)=\rho_4(Z_i,\bar Z_i)=0\quad  \mbox{ if } i\not\in\{p_1,\ldots,p_k\}\,,
 $$
 and, for $i\in\{p_1,\ldots, p_k\}$,
 $$
 \rho_3(Z_{i},\bar Z_{i})+\rho_4(Z_{i},\bar Z_{i})=-\frac{1}{2} g^{i\overline{s+i}}\bar \lambda_{ii} C_i
 -g^{s+i\overline{i}}\frac{1}{2}\lambda_{ii}\bar C_i +g^{s+i\overline{i}}\frac{1}{2}\lambda_{ii}\bar C_i+g^{\overline{s+i}i}\frac{1}{2}\bar \lambda_{ii}C_i=0\,. 
$$
Now, we focus on the calculation of $\rho_B(Z_i,\bar W_j)$. We have 
$$
\begin{aligned}
\rho_1(Z_i,\bar W_j)=&\, \sum_{a=1}^sg^{a\bar a}\bar \lambda_{ij}\left(-\frac{\sqrt{-1}}{2}\omega(\bar W_j, Z_a-\bar Z_a)+\omega([\bar W_j, \bar Z_a], Z_a)\right)\\
=&\, \begin{cases}0 \quad &\mbox{otherwise}\,,  \\ 
\sqrt{-1}g^{i\bar i}C_i\bar \lambda_{ii}\left(\frac{\sqrt{-1}}{2}-\bar \lambda_{ii}\right) \quad & \mbox{if } i=j\in\{p_1,\ldots, p_k\}\, ,
\end{cases}
\end{aligned}
$$
and since $\mathfrak{I}$ is abelian
$$
\rho_2(Z_i,\bar W_j)=0\, .
$$
Furthermore
$$
\begin{aligned}
\rho_3(Z_i,\bar W_j)=&\, 
  \sum_{j=1}^kg^{\overline {p_j} s+p_j  }\omega([[Z_i,\bar W_j]^{0,1},\bar Z_{p_j}], W_{p_j})=-\sqrt{-1}\sum_{j=1}^kg^{\overline {p_j} s+p_j  }\bar \lambda_{ij}\bar \lambda_{p_j p_j}g_{\overline{s+j}s+p_j}\\
=&\, \begin{cases}0 \quad & \mbox{otherwise}\,,  \\ 
 -\sqrt{-1}\bar \lambda_{jj}^2g^{\overline{j}s+j}B_j\quad & \mbox{if } i=j\in \{p_1,\ldots, p_k\}\end{cases}
\end{aligned}
$$
and 
$$
\begin{aligned}
\rho_4(Z_i,\bar W_j)=&\sum_{j=1}^kg^{s+p_j\bar p_j}\omega([Z_i,\bar W_j],[W_{p_j},\bar Z_{p_j}])=\sqrt{-1}\sum_{j=1}^kg^{s+p_j\bar p_j}\bar \lambda_{ij}\lambda_{p_jp_j}g_{\overline{s+j}s+p_j} \\ 
=&\, \begin{cases}0 \quad &\mbox{otherwise}\,,\\
 \sqrt{-1}g^{s+j\bar j}\bar \lambda_{jj}\lambda_{jj}B_j\quad & \mbox{if } i=j\in \{p_1,\ldots, p_k\}\, . \end{cases}
\end{aligned}
$$

It follows that $\rho_B(Z_i,\bar W_j)\ne 0 $ if and only if $i=j\in \{p_1,\ldots, p_k\}.$ In such a case, we have 
$$
\begin{aligned}
\rho_B(Z_j,\bar W_j)=&
-\sqrt{-1}\left(g^{s+j\overline{j}}B_j\left(\lvert\lambda_{jj}\rvert^2-\bar \lambda_{jj}^2\right)+g^{j\bar j}C_j\bar \lambda_{jj}\left(\frac{\sqrt{-1}}{2}-\bar \lambda_{jj}\right)\right)\,.
\end{aligned}
$$
Since 
$$
g^{s+j\bar j }B_j=-\frac{B_jC_j}{A_jB_j-\lvert C_j\rvert^2}\quad \mbox{and }\quad g^{j\bar j}C_j=\frac{B_jC_j}{A_jB_j-\lvert C_j\rvert^2}\,,
$$
we infer 
$$
\rho_B(Z_j,\bar W_j)=-\sqrt{-1}\left(\bar\lambda_{jj}\left(\frac{\sqrt{-1}}{2}-\bar\lambda_{jj}\right)-\left(\lvert\lambda_{jj}\rvert^2-\bar\lambda_{jj}^2\right)\right)\frac{B_jC_j}{A_jB_j-\lvert C_j\rvert^2}\,.
$$
Taking into account that $\lambda_{jj}=-\frac{\sqrt{-1}}{4}-\frac{c_{jj}}{2}$, we obtain
$$
\rho_B(Z_j,\bar W_j)=-\sqrt{-1}\left(-\frac{3}{16}-\frac{c_{jj}^2}{4}-\frac{\sqrt{-1}c_{jj}}{4}\right)\frac{B_jC_j}{A_jB_j-\lvert C_j\rvert^2}
$$
and the claim follows. 
\end{proof}

\begin{cor}\label{Cor 5.4}
Let $\omega$ be a left-invariant  pluriclosed  Hermitian metric on an Oeljeklaus-Toma manifold $M$. Then $\omega$ lifts to an algebraic expanding soliton of the pluriclosed flow on the universal covering of $M$ if and only if it  takes the following diagonal expression with respect to a coframe $\{\omega^1,\dots,\omega^s,\gamma^1,\dots,\gamma^s\}$ satisfying \eqref{eqsstr} and \eqref{lambdas}:
\begin{equation}\label{diagonal}
\omega=\sqrt{-1}\sum_{i=1}^sA\omega^i\wedge \bar \omega^i+ B_i\gamma^i\wedge \bar \gamma^i\,.
\end{equation}
\end{cor}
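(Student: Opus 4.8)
The plan is to pair the explicit description of $\rho_B^{1,1}$ in Proposition \ref{prop4.3} with the defining self-similarity equation of an algebraic soliton. Writing the ansatz $\omega_t=c_t\varphi_t^*\omega$ with $\varphi_t=\exp(tD)\in\mathrm{Aut}(G,J)$ and differentiating at $t=0$, exactly as in the Chern-Ricci case (Theorem \ref{Jorge}), $\omega$ lifts to an algebraic soliton if and only if there exist $\lambda\in\R$ and a derivation $D\in\mathrm{Der}(\g)$ with $DJ=JD$ such that $-\rho_B^{1,1}=\lambda\,\omega+\mathcal L_{X_D}\omega$, where on left-invariant $2$-forms $\mathcal L_{X_D}\omega(Y,Z)=\omega(DY,Z)+\omega(Y,DZ)$, and the soliton is expanding precisely when $\lambda>0$. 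So I would first classify the $J$-preserving derivations of $\g$ and then read off the soliton equation componentwise on the frame $\{Z_i,W_i\}$ for a metric of the form \eqref{palpebra}.

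The crux is the derivation classification, which also supplies the fact used in the proof of Proposition \ref{main1}: every $D\in\mathrm{Der}(\g)$ with $DJ=JD$ kills $\mathfrak h$ and acts diagonally on $\mathfrak I$. I would argue in three steps. First, the $\mathfrak h\to\mathfrak h$ block of $D$ is a $J$-preserving derivation of $\mathfrak h\cong\mathfrak f\oplus\cdots\oplus\mathfrak f$; a short computation on each filiform factor (using $[e_1,e_2]=-\tfrac12 e_1$ and $J_{\mathfrak f}e_1=e_2$) shows it vanishes, so $DZ_k=\sum_i\mu_{ki}W_i$. Second, applying $D$ to $[Z_k,\bar Z_k]=-\tfrac{\sqrt{-1}}{2}(Z_k+\bar Z_k)$ and comparing the $W_i$-components gives $\mu_{ki}\big(\lambda_{ki}-\tfrac{\sqrt{-1}}{2}\big)=0$; since $\Im\mathfrak m\,\lambda_{ki}\in\{0,-\tfrac14\}$ we have $\lambda_{ki}\ne\tfrac{\sqrt{-1}}{2}$, whence $\mu_{ki}=0$ and $\mathfrak h\subseteq\ker D$. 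Third, writing $DW_i=\sum_l\nu_{il}Z_l+\sum_j E_{ij}W_j$ and applying $D$ to $[Z_k,W_i]=-\lambda_{ki}W_i$ (note $[Z_k,Z_l]=0$), the $Z$-components force $\nu_{il}=0$, so $D$ preserves $\mathfrak I$, while the $W$-components give $E_{ij}(\lambda_{kj}-\lambda_{ki})=0$ for all $k$; because $\lambda_{ii}$ and $\lambda_{ij}$ have distinct imaginary parts for $i\ne j$, this forces $E$ diagonal, say $DW_i=d_iW_i$. A direct check confirms that all such $D$ are indeed derivations, so this is the full list.

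With $D\mathfrak h=0$ and $DW_i=d_iW_i$ in hand, $\mathcal L_{X_D}\omega$ is immediate: it vanishes on $(Z_i,\bar Z_j)$, equals $\bar d_i\,\omega(Z_i,\bar W_i)$ on $(Z_i,\bar W_i)$, and $2\Re\mathfrak e(d_i)\,\omega(W_i,\bar W_i)$ on $(W_i,\bar W_i)$. Matching $-\rho_B^{1,1}=\lambda\omega+\mathcal L_{X_D}\omega$ against Proposition \ref{prop4.3}: the $(Z_i,\bar Z_i)$ equation reads $\tfrac34\big(1+\tfrac{|C_i|^2}{A_iB_i-|C_i|^2}\big)=\lambda A_i$, which forces $\lambda>0$ by positivity of $\omega$; the $(W_i,\bar W_i)$ equation gives $\Re\mathfrak e\,d_i=-\lambda/2$; and the $(Z_i,\bar W_i)$ equation becomes $C_i\big(\mu_i\tfrac{B_i}{A_iB_i-|C_i|^2}-(\lambda+\bar d_i)\big)=0$ with $\mu_i=-\tfrac{3}{16}-\tfrac{c_{ii}^2}{4}-\tfrac{\sqrt{-1}c_{ii}}{4}$. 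Taking real parts and substituting $\Re\mathfrak e\,d_i=-\lambda/2$, the bracket becomes $\Re\mathfrak e(\mu_i)\tfrac{B_i}{A_iB_i-|C_i|^2}-\lambda/2<0$, since $\Re\mathfrak e\,\mu_i<0$ and $\lambda>0$; hence $C_i=0$ for every $i$ and $\omega$ is diagonal. The $(Z_i,\bar Z_i)$ equation then collapses to $\tfrac34=\lambda A_i$, forcing all $A_i$ to equal $A=\tfrac{3}{4\lambda}$, which is exactly \eqref{diagonal}, expanding as $\lambda>0$. Conversely, for a diagonal metric with equal $A_i$, choosing $\lambda=\tfrac{3}{4A}$ and any $d_i$ with $\Re\mathfrak e\,d_i=-\lambda/2$ solves the soliton equation. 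The main obstacle is the derivation classification of the middle paragraph; once $\mathfrak h\subseteq\ker D$ and the diagonality of $E$ are established, the rest is just reading off Proposition \ref{prop4.3} and a real-part sign check.
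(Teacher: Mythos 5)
Your proposal is correct and takes essentially the same route as the paper: both rest on Lauret's soliton criterion $\rho_B^{1,1}=c\,\omega+\tfrac12\left(\omega(D\cdot,\cdot)+\omega(\cdot,D\cdot)\right)$, the same classification of $J$-commuting derivations ($\mathfrak{h}\subseteq\ker D$ via $\lambda_{ki}\neq\tfrac{\sqrt{-1}}{2}$, then $DW_i=d_iW_i$ via the distinct imaginary parts of $\lambda_{ii}$ and $\lambda_{ij}$), and the same componentwise matching against Proposition \ref{prop4.3}. The only deviations are cosmetic and if anything slightly cleaner: you get $D(\mathfrak{I})\subseteq\mathfrak{I}$ by applying $D$ to $[Z_k,W_i]$ with $DZ_k=0$ rather than the paper's argument through $D[W_i,\bar W_j]=0$, and you force $C_i=0$ by a single real-part estimate ($\Re\mathfrak{e}\,\mu_i<0$ together with $\lambda>0$ and $\Re\mathfrak{e}\,d_i=-\lambda/2$) instead of the paper's contradiction between $c=2\Re\mathfrak{e}(K)>0$ and the sign forced by the $(Z_{\tilde j},\bar Z_{\tilde j})$ equation.
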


\begin{proof}  
%
Let $\omega$ be a pluriclosed left-invariant metric  on an Oeljeklaus-Toma manifold $M$. 
In view of  \cite[Section 7]{lauret}, $\omega$ lifts to an algebraic expanding soliton of the pluriclosed flow on the universal covering of $M$ if and only if 
$$
\rho^{1,1}_B(\cdot,\cdot)=c \omega(\cdot,\cdot)+\frac12\left(\omega(D\cdot,\cdot)+\omega(\cdot,D\cdot)\right)\,,
$$
for some  $c\in \R_{-}$ and some derivation $D$ of $\g$ such that $DJ=JD$.

\medskip 
Assume that $\omega$ takes the expression in formula \eqref{diagonal}. 
Proposition \ref{prop4.3} implies that $\rho_B$ is represented with respect to the basis $\{Z_1,\ldots, Z_s,W_1,\ldots, W_s\}$ by the matrix 
$$
P=-\frac{3}{4A} \begin{pmatrix}
{\rm I}_\mathfrak{h} & 0\\
0 & 0
\end{pmatrix}\,.
$$ 
Since 
$$
\frac{3}{4A}  \begin{pmatrix}
0 & 0\\
0 & {\rm I}_\mathfrak{I}
\end{pmatrix}
$$
induces a symmetric derivation on $\g$, $\omega$ lifts to an algebraic expanding soliton of the pluriclosed flow on the universal covering of $M$ and the first part of the claim follows. 

\medskip 
In order to prove the second part of the statement, we need some preliminary observations on derivations $D$ of $\mathfrak{g}$ that commute with $J$, i.e. such that 
$$
D(\mathfrak{g}^{1,0})\subseteq \mathfrak{g}^{1,0}\, ,\quad D(\mathfrak{g}^{0,1})\subseteq \mathfrak{g}^{0,1}\,.
$$
We can write
$$ 
DZ_i= \sum_{j=1}^s k^i_jZ_j+m^i_jW_j\quad \mbox{ and }\quad 
D\bar Z_i= \sum_{j=1}^sl^i_j\bar Z_j+ r^i_j\bar W_j\, .
$$
 Since $D$ is a derivation, we have, for all $i=1,\ldots, s$, 
 $$
 D[Z_i,\bar Z_i]=[DZ_i,\bar Z_i]+[Z_i,D\bar Z_i]\, .
 $$
 On the other hand 
 $$
 \begin{aligned}
 D[Z_i,\bar Z_i]=&\,
 -\frac{\sqrt{-1}}{2}\left(\sum_{j=1}^sk^i_jZ_j+l^i_j\bar Z_j+m^i_jW_j+r^i_j\bar W_j \right)\, ,\\
 [DZ_i,\bar Z_i]=&\,
 -\frac{\sqrt{-1}}{2}k^i_i(Z_i+\bar Z_i)-\sum_{j=1}^sm^i_j\lambda_{ij}W_j\,,\\
 [Z_i,D\bar Z_i]=&\,
 -\frac{\sqrt{-1}}{2}l^i_j(Z_i+\bar Z_i)+\sum_{j=1}^sr^i_j\bar \lambda_{ij}\bar W_j
 \end{aligned}
 $$
and 
 $$
 \begin{aligned}
 0=&\, D[Z_i,\bar Z_i]-[DZ_i,\bar Z_i]-[Z_i,D\bar Z_i]\\
 =&\, -\frac{\sqrt{-1}}{2}\sum_{j\ne i}k^i_jZ_j+l^i_j\bar Z_j+\frac{\sqrt{-1}}{2}l^i_iZ_i+\frac{\sqrt{-1}}{2}k^i_i\bar Z_i+\sum_{j=1}^sm^i_j\left(\lambda_{ij}-\frac{\sqrt{-1}}{2}\right)W_j-r^i_j\left(\frac{\sqrt{-1}}{2}+\bar \lambda_{ij}\right)\bar W_j
 \end{aligned}
 $$
 which forces $DZ_i, D\bar Z_i=0$, for all $i=1,\ldots, s$.  It follows that  $D_{|\mathfrak{h}}=0$. 
 
 \medskip 
Moreover, for all $I,I'\in \mathfrak{J}$, we have 
 $$
 0=D[I,I']=[DI,I']+[I,DI']\, ,
 $$
 which implies 
  $$
 [DI,I']=-[I,DI']\, .
 $$
Assume
$$
DW_i= \sum_{j=1}^s k^{s+i}_jZ_j+m^{s+i}_jW_j \quad \mbox{ and }\quad 
D\bar W_i= \sum_{j=1}^sl^{s+i}_j\bar Z_j+ r^{s+i}_j\bar W_j\,,
$$
then
 $$
 [DW_i,\bar W_i]=\sum_{j=1}^sk^{s+i}_j[Z_j,\bar W_i]\in\mathfrak{J}^{0,1}
 \quad \mbox{ and }\quad 
 [W_i,D\bar W_i]=\sum_{j=1}^sl^{s+i}_j[W_i,\bar Z_j]\in \mathfrak{J}^{1,0}\,.
 $$
 This implies
$$
DW_i= \sum_{j=1}^sm^{s+i}_jW_j\, ,\quad 
D\bar W_i= \sum_{j=1}^s r^{s+i}_j\bar W_j\,,
$$
i.e. $D(\mathfrak{J})\subseteq\mathfrak{J}$.
Moreover, for all $i=1,\ldots, s$, we have that 
 $$
 D[Z_i, W_i]=-\lambda_{ii}DW_i=-\sum_{j=1}^s\lambda_{ii}m^{s+i}_jW_j\,,
 $$
 while $[DZ_i,W_i]=0$ and 
 $$
 [Z_i, DW_i]=-\sum_{j=1}^sm^{s+i}_j\lambda_{ij}W_j\,.
 $$
 Using again the fact that $D$ is a derivation, we have 
$$
 DW_i=\sum_{j\in J_i}m_jW_j
 $$
 where 
 $$
 J_i=\{j\in \{1,\ldots, s\}\quad | \quad \lambda_{ii}=\lambda_{ij}\}\, .
 $$
 With analogous computations, we infer 
 $$
 D\bar W_i=\sum_{j\in J_i}r^{s+i}_j\bar W_j\,.
 $$
 Clearly, $i\in J_i$. On the other hand, for all $i=1,\ldots, s$, we know that $\Im\mathfrak{m}(\lambda_{ii})\ne 0 $, while, for all $i\ne j$, $\lambda_{ij}\in\R.$ This guarantees that, for all $i=1,\ldots, s$, 
 $$
 J_{i}=\{i\}\,.
 $$
 This allows us to write 
$$
DW_i= m^{s+i}_iW_i\,,\quad 
D\bar W_i= r^{s+i}_i\bar W_i\,.
$$
From the relations above, we obtain that 
$$
{\rm Der}(\mathfrak{g})^{1,0}=\{E\in {\rm End}(\mathfrak{g})^{1,0}\quad|\quad \mathfrak{h}\subseteq\ker(E)\,, \,\, E(\langle W_i\rangle)\subseteq \langle W_i\rangle\,, \quad \mbox{for all } i =1,\ldots, s\}\,.
$$
First of all,   we suppose that $\omega$ is a pluriclosed Hermitian metric  which   takes the following diagonal expression with respect to a coframe $\{\omega^1,\dots,\omega^s,\gamma^1,\dots,\gamma^s\}$ satisfying \eqref{eqsstr} and \eqref{lambdas}:
$$
\omega=\sqrt{-1}\sum_{i=1}^sA_i\omega^i\wedge \bar \omega^i+ B_i\gamma^i\wedge \bar \gamma^i\,. 
$$  such that there exist $i, j\in\{1,\ldots, s\}$ such that $A_i\ne A_j$ and we suppose that $\omega$ is an algebraic soliton. Thanks to the facts regarding derivations proved before, we have that 
$$
\begin{aligned}
-\sqrt{-1}\frac34=\rho_B(Z_i,\bar Z_i)=&\,  c\omega(Z_i,\bar Z_i)+ \frac12\left(\omega(DZ_i,\bar Z_i)+\omega(Z_i, D\bar Z_i)\right)=\sqrt{-1}cA_i\,,\\
-\sqrt{-1}\frac34=\rho_B(Z_j,\bar Z_j)=&\,  c\omega(Z_j,\bar Z_j)+ \frac12\left(\omega(DZ_j,\bar Z_j)+\omega(Z_j, D\bar Z_j)\right)=\sqrt{-1}cA_j\,,
\end{aligned}
$$ which is impossible, since $A_i\ne A_j$.

Now suppose that $\omega$ is a pluriclosed metric on $M$ which is not diagonal. So, we suppose that  there exists  $\tilde{j}=1,\ldots, s$ such that $C_{\tilde{j}}\ne 0.$   Then, assume that there exist a constant $c\in\mathbb{R}$ and $D\in {\rm Der(}\mathfrak{g})$   such that 
 $$
(\rho_B)^{1,1}(\cdot, \cdot)=c\omega(\cdot, \cdot)+ \frac12\left(\omega(D\cdot,\cdot)+\omega(\cdot,D\cdot)\right)\,,\quad DJ=JD\,.
$$

On the other hand
$$
\begin{aligned} 
0=\rho_B(W_{\tilde{j}},\bar W_{\tilde{j}})=&\,  c\omega(W_{\tilde{j}},\bar W_{\tilde{j}})+ \frac12\left(\omega(DW_{\tilde{j}},\bar W_{\tilde{j}})+\omega(W_{\tilde{j}}, D\bar W_{\tilde{j}})\right)=\sqrt{-1}cB_{\tilde{j}}+\frac{\sqrt{-1}}{2}(r_{\tilde{j}}^{s+\tilde{j}}+m_{\tilde{j}}^{s+\tilde{j}})B_{\tilde{j}}\,,  \\
  \rho_B(Z_{\tilde{j}},\bar W_{\tilde{j}})= & c\omega(Z_{\tilde{j}},\bar W_{\tilde{j}})+ \frac{1}{2}\left(\omega(DZ_{\tilde{j}},\bar W_{\tilde{j}})+\omega(Z_{\tilde{j}}, D\bar W_{\tilde{j}})\right)=\sqrt{-1}cC_{\tilde{j}}+\frac{\sqrt{-1}}{2}r^{s+\tilde{j}}_{\tilde{j}}C_{\tilde{j}}\, ,\\
 \rho_B(\bar Z_{\tilde{j}}, W_{\tilde{j}})=&  c\omega(\bar Z_{\tilde{j}}, W_{\tilde{j}})+ \frac12\left(\omega(D\bar Z_{\tilde{j}}, W_{\tilde{j}})+\omega(\bar Z_{\tilde{j}}, D W_{\tilde{j}})\right)=-\sqrt{-1}c\bar C_{\tilde{j}}-\frac{\sqrt{-1}}{2}m^{s+\tilde{j}}_{\tilde{j}}\bar C_{\tilde{j}}\,,
 \end{aligned}
 $$
   which implies that 
   $$
   c=-\frac12(r_{\tilde{j}}^{s+{\tilde{j}}}+m_{\tilde{j}}^{s+\tilde{j}})\, ,
   $$
   On the other hand, 
  $$
   \rho_B(Z_{\tilde{j}},\bar W_{\tilde{j}})=\sqrt{-1}KC_{\tilde{j}}\, ,
  $$
  where $$
  K=\left(\frac{3}{16}+\frac{c_{\tilde{j}\tilde{j}}^2}{4}+\frac{\sqrt{-1}c_{\tilde{j}\tilde{j}}}{4}\right)\frac{B_{\tilde{j}}}{A_{\tilde{j}}B_{\tilde{j}}-\lvert C_{\tilde{j}}\rvert^2}\,. 
  $$
  Then, 
  $$
  K=c+\frac12r_{\tilde{j}}^{s+\tilde{j}}=-\frac12m_{\tilde{j}}^{s+\tilde{j}} 
  $$
  and 
  $$
  \bar K=c+\frac12m_{\tilde{j}}^{s+\tilde{j}}=-\frac12r_{\tilde{j}}^{s+\tilde{j}}\, .
  $$
  From this we obtain that 
  $$
  c=K+\bar K=2\Re\mathfrak{e}(K)>0\,.
  $$
   On the other hand,  we have 
   $$
   -\sqrt{-1}\frac34\left(1+\frac{\lvert C_{\tilde{j}}\rvert^2}{A_{\tilde{j}}B_{\tilde{j}}-\lvert C_{\tilde{j}}\rvert^2}\right)= \rho_B(Z_{\tilde{j}},\bar Z_{\tilde{j}})= c\omega(Z_{\tilde{j}},\bar Z_{\tilde{j}})+ \frac12\left(\omega(DZ_{\tilde{j}},\bar Z_{\tilde{j}})+\omega(Z_{\tilde{j}},D\bar Z_{\tilde{j}})\right)=\sqrt{-1}cA_{\tilde{j}}\,,
   $$
   which implies that $c$ must be negative. From this the claim follows.
\end{proof}

\begin{cor}\label{Cor4.5}
Let $\omega$ be a pluriclosed  Hermitian metric on an Oeljeklaus-Toma manifold which takes the form $(\ref{gPC})$. Then the pluriclosed flow starting from $\omega$ is equivalent to the following system of ODEs:
\begin{equation}
\begin{cases} A_i'=\frac{3}{4}\quad &\mbox{if } i\not\in\{p_1,\ldots, p_k\}\,, \\
 A_{p_r}'=\frac{3}{4}\left(1+\frac{\lvert C_{r}\rvert^2}{A_{p_r}B_{p_r}-\lvert C_r\rvert^2}\right)\quad& \mbox{for all } r=1,\ldots, k\, , \\ 
B_j'=0\quad &\mbox{for all }j=1,\ldots, s\, , \\ 
C_r'= -\left(\frac{3}{16}+\frac{c_{p_rp_r}^2}{4}+\frac{\sqrt{-1}c_{p_rp_r}}{4}\right)\frac{B_{p_r}C_r}{A_{p_r}B_{p_r}-|C_r|^2}\quad &\mbox{for all } r=1,\ldots, k\, .\end{cases}
\end{equation} 
Moreover, $\lvert C_r\rvert$ is bounded, for all $r =1,\ldots, k$, the solution exists for all  $ t\in [0,+\infty)$ and $A_i\sim\frac{3}{4}t$, as $t\to+\infty,$ for all  $ i=1,\ldots, s.$ 

In particular, 
$$
\frac{\omega_t}{1+t}\to 3\omega_{\infty}
$$ 
as $t\to \infty$. 
\end{cor}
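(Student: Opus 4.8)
The plan is to read the flow off directly from the formula for $\rho_B^{1,1}$ established in Proposition \ref{prop4.3}. First I would note that, for a metric of type \eqref{gPC}, Proposition \ref{prop4.3} shows $\rho_B^{1,1}(\omega)$ is again a real combination of exactly the $(1,1)$-forms $\sqrt{-1}\,\omega^i\wedge\bar\omega^i$ and $\sqrt{-1}(\omega^{p_r}\wedge\bar\gamma^{p_r}+\text{conj.})$ occurring in \eqref{gPC}, with no $\gamma^j\wedge\bar\gamma^j$ component and no components outside this list. Consequently the family \eqref{gPC}, with the set $\{p_1,\dots,p_k\}$ fixed by the structure constants $\lambda_{ki}$, is preserved by $\partial_t\omega_t=-\rho_B^{1,1}(\omega_t)$, and equating the coefficients of each basic form turns the flow into the stated ODE system for $A_i$, $B_j$ and $C_r$; in particular the absence of a $\gamma^j\wedge\bar\gamma^j$ term gives $B_j'=0$.

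The qualitative part rests on two monotonicity computations. Since $B_j'=0$, each $B_j$ is a positive constant. Writing $\Delta_r:=A_{p_r}B_{p_r}-|C_r|^2$, positivity of $\omega$ is equivalent to $A_i>0$, $B_j>0$ and $\Delta_r>0$, and holds at $t=0$. From the $C_r$-equation I would compute
$$
\frac{d}{dt}|C_r|^2=2\,\Re\mathfrak e(\bar C_r C_r')=-2\left(\frac{3}{16}+\frac{c_{p_rp_r}^2}{4}\right)\frac{B_{p_r}|C_r|^2}{\Delta_r},
$$
the purely imaginary term $\tfrac{\sqrt{-1}}{4}c_{p_rp_r}$ dropping out of the real part; as long as $\Delta_r>0$ this forces $|C_r|^2$ to be non-increasing, hence bounded by $|C_r(0)|^2$. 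Differentiating $\Delta_r$ and using $B_j'=0$ and $A_{p_r}'\ge\tfrac34$ gives $\Delta_r'=A_{p_r}'B_{p_r}-\tfrac{d}{dt}|C_r|^2\ge\tfrac34 B_{p_r}>0$, so $\Delta_r$ increases and $\Delta_r(t)\ge\Delta_r(0)+\tfrac34 B_{p_r}t$. Together these keep $\omega_t$ positive definite and bound the right-hand sides, yielding the long-time solution on $[0,\infty)$.

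For the asymptotics, the equations $A_i'=\tfrac34$ with $i\notin\{p_r\}$ integrate to $A_i(t)=A_i(0)+\tfrac34 t$ exactly. For $i=p_r$, rewriting the computation above as $\tfrac{B_{p_r}|C_r|^2}{\Delta_r}=-(2(\tfrac{3}{16}+\tfrac{c_{p_rp_r}^2}{4}))^{-1}\tfrac{d}{dt}|C_r|^2$ shows $\int_0^\infty\tfrac{|C_r|^2}{\Delta_r}\,ds<\infty$, so integrating $A_{p_r}'=\tfrac34\bigl(1+\tfrac{|C_r|^2}{\Delta_r}\bigr)$ gives $A_{p_r}(t)=\tfrac34 t+O(1)$. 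Hence $A_i\sim\tfrac34 t$ for every $i$. Finally, dividing \eqref{palpebra} by $1+t$ and letting $t\to\infty$, the bounded coefficients $B_i$ and $C_r$ contribute nothing while $\tfrac{A_i}{1+t}\to\tfrac34$, so $\tfrac{\omega_t}{1+t}\to\sqrt{-1}\,\tfrac34\sum_{i=1}^s\omega^i\wedge\bar\omega^i=3\omega_\infty$, recalling $\omega_\infty=\tfrac{\sqrt{-1}}{4}\sum_{i=1}^s\omega^i\wedge\bar\omega^i$.

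The main obstacle I anticipate is the long-time existence: the coupled $A_{p_r}$--$C_r$ subsystem is singular exactly where $\Delta_r\to0$, so everything hinges on the sign computation showing $\Delta_r$ is monotone increasing (and hence bounded below by its positive initial value). Once that is secured, the boundedness of $|C_r|$, the linear growth of the $A_i$, and the final convergence are routine consequences.
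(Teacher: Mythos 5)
Your proposal is correct and follows essentially the same route as the paper: you reduce the flow to the stated ODE system via Proposition \ref{prop4.3}, show $\lvert C_r\rvert^2$ is non-increasing, and show $u_r=A_{p_r}B_{p_r}-\lvert C_r\rvert^2$ (your $\Delta_r$) grows at least linearly, which yields long-time existence, $A_i\sim\tfrac34 t$ and the convergence $\tfrac{\omega_t}{1+t}\to 3\omega_\infty$. The only (harmless) deviations are that you spell out the preservation of the ansatz \eqref{gPC} and the continuation argument more explicitly, and you obtain $A_{p_r}=\tfrac34 t+O(1)$ from integrability of $\lvert C_r\rvert^2/u_r$, whereas the paper concludes from $\lvert C_r\rvert^2/u_r\to 0$.
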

\begin{proof}
Observe that,  for every $ r\in \{1,\ldots, k\}$,
$$
\begin{aligned}
(\lvert C_r\rvert^2)'=&\, 
- \left(\frac{3}{8}+\frac{c_{jj}^2}{2}\right)\frac{B_{p_r}\lvert C_r\rvert^2}{A_{p_r}B_{p_r}-\lvert C_r\rvert^2}\le 0\, ,
\end{aligned}
$$
which guarantees  that $\lvert  C_r\rvert^2$ is bounded. On the other hand, denote, for all $ r=1,\ldots, k$,
 $$
 u_r=A_{p_r}B_{p_r}- \lvert C_r\rvert^2.
 $$
  We have that
  $$
  u_r'=A'_{p_r}B_{p_r}-(\lvert C_r\rvert^2)'=\frac{3}{4}B_{p_r}+\left(\frac{9}{8}+\frac{c^2_{p_rp_r}}{2}\right)\frac{B_{p_r}\lvert C_r\rvert^2}{A_{p_r}B_{p_r}-\lvert C_r\rvert^2}\ge 0\, .
 $$
 This guarantees $$
 A_{p_r}'=\frac{3}{4}\left(1+\frac{\lvert C_{r}\rvert^2}{A_{p_r}B_{p_r}-\lvert C_r\rvert^2}\right)\le \frac{3}{4}\left(1+\frac{K}{u_r(0)}\right)\,,
 $$
  where $K>0$ such that $\lvert C_r\rvert^2\le K$,  for all $ t\ge 0$. This implies the long-time existence.  As regards the last part of the statement, it is sufficient to prove that
   $$
  \lim_{t\to+\infty} \frac{\lvert C_r\rvert^2}{u_r}=0\,.
  $$
But, 
$$
u_r'\ge \frac{3}{4}B_{p_r}\,.
$$ 
So,
$$
u_r\ge \frac{3}{4}B_{p_r}t+u_r(0)\to +\infty\, ,\,\,\, t\to+\infty\, .
$$
Then, $$
\lim_{t\to+\infty}u_r(t)=+\infty\, ,
$$ 
and, since $\lvert C_r\rvert^2$ is bounded, the assertion follows. \end{proof}

\begin{proof}[Proof of Theorem $\ref{main2}$]
Let $\omega$ be a left-invariant pluriclosed metric on an Oeljeklaus-Toma manifold. 
Corollary \ref{Cor4.5} implies that pluriclosed flow starting from $\omega$ has a long-time solution $\omega_t$ such that
$$
\frac{\omega_t}{1+t}\to3\omega_\infty \quad \mbox{ as }\quad  t\to \infty\,. 
$$
We show that $\frac{\omega_t}{1+t}$ satisfies conditions 1,2,3 in Proposition \ref{GH}. Here we 
denote by $|\cdot|_t$ the norm induced by $\omega_t$.  

\smallskip
 Taking into account that 
$$
\omega_{t|\mathfrak I\oplus \mathfrak I}=\omega_{0|\mathfrak I\oplus \mathfrak I}\, ,
$$
 condition 2 follows.
\smallskip

Thanks to the fact that condition 2 holds, 
$$
\omega_{t|\mathfrak h\oplus \mathfrak h}=
\sum_{i=1}^sA_i(t)\omega^i\wedge \bar \omega^i
$$
with $\frac{A_i(t)}{1+t}\to \frac34$ as $t\to \infty$ and there exist $C,T>0$ such that, for every vector $v\in \mathfrak h$, 
$$
\frac{1}{\sqrt{1+t}}|v|_t\leq C|v|_{0}\,, 
$$
for every $t\geq T$, condition 1 is satisfied.

\smallskip 
In order to prove Condition 3, let $\epsilon, \ell >0$ and let $\gamma$ be a curve in $M$ tangent to $\mathcal H$ which is parametrized by arclength with respect to $3\omega_\infty$ and such that $L_{\infty}(\gamma)<\ell$. 
Let $v=\dot \gamma$ and $T>0$ such that 
$$
\left\vert \frac{A_i(t)}{1+t}-\frac{3}{4}\right\vert\leq \frac{3\epsilon^2}{4\ell^2}\,,
$$
for $t\ge  T$. 
Then
$$
\left\vert\frac{1}{1+t}|v|^2_t-|v|^2_{\infty}\right\vert\le \sum_{i=1}^{s}\left\vert\frac{A_i(t)}{1+t}-\frac34 \right\vert\lvert v_i\rvert^2\leq  \frac{\epsilon^2}{\ell^2}
$$
and 
$$
|L_{t}(\gamma)-L_{\infty}(\gamma)|\le \int_{0}^{b}\left\lvert\frac{1}{\sqrt{1+t}}|\dot \gamma |_t-|\dot \gamma|_\infty\right\rvert da\leq \frac{\epsilon}{\ell}b\leq \epsilon \,, 
$$
since $b\leq \ell$.

\medskip 
Now we show the last part of the statement, using the same argument as in Proposition \ref{main1},  and we prove that  $(\mathbb H^s\times\mathbb C^s, \frac{\omega_t}{1+t})$ converges in the Cheeger-Gromov sense to $(\mathbb H^s\times\mathbb C^s, \tilde{\omega}_{\infty})$  where $\tilde{\omega}_{\infty}$  is an  algebraic soliton. Again, here we are identifying $\omega_t$ with its pull-back onto $\mathbb H^s\times\mathbb C^s$ and  we are fixing as base point the identity element of $\mathbb H^s\times\mathbb C^s$. It is enough to construct  a  1-parameter family of biholomorphisms $\{\varphi_t\}$  of $\mathbb H^s\times\mathbb C^s$ such that 
 $$
 \varphi_t^*\frac{\omega_t}{1+t}\to \tilde{\omega}_{\infty}\,.
 $$
 As we already observed, since $\mathfrak{I}$ is abelian the endomorphism represented by the matrix
 $$
 D=\begin{pmatrix} 0 & 0 \\ 0 & I_{\mathfrak{I}}\end{pmatrix}
 $$
 is a derivation of $\mathfrak{g}$ that commutes with the complex structure $J$. 
 Then, we can consider 
 $$
 d\varphi_t= \exp(s(t)D)=\begin{pmatrix}I_{\mathfrak{h}}& 0 \\ 0 & e^{s(t)}I_{\mathfrak{I}}\end{pmatrix}\in {\rm Aut}(\mathfrak{g}, J)
 $$
 where $s(t)=\log(\sqrt{1+t})$.  Using $d\varphi_t$,  we can define 
 $$
 \varphi_t\in {\rm Aut}(\mathbb{H}^s\times\mathbb{C}^s, J)\, . 
 $$
For $i=1,\ldots, s$ we have 
 $$
  \begin{aligned}
  \frac{1}{1+t}(\varphi_t^*\omega_t)(Z_i,\bar Z_i)=&\, \frac{1}{1+t}\omega_t(Z_i,\bar Z_i)\to \frac34\sqrt{-1}\, , \quad \mbox{as } t\to \infty\,, \\  \frac{1}{1+t}(\varphi_t^*\omega_t)(Z_i,\bar W_i)=&\, \frac{1}{\sqrt{1+t}}\omega_t(Z_i, \bar W_i )\to 0\, ,  \quad \mbox{as } t\to \infty\,, \\
  \frac{1}{1+t}(\varphi_t^*\omega_t)(W_i,\bar W_i)=&\,\omega_t(W_i,\bar W_i)=\sqrt{-1}B_i(0)\,. 
  \end{aligned}
   $$
   Then, 
   $$
   \frac{1}{1+t}\varphi_t^*\omega_t\to \tilde{\omega}_{\infty}\, , \quad \mbox{as } t\to \infty\,,
   $$
   where 
$$
\tilde{\omega}_{\infty}=3\,\omega_{\infty}+\omega_{|\mathfrak{I}\oplus \mathfrak{I}}\,.
$$
Notice that $\tilde{\omega}_{\infty}$ is an algebraic soliton diagonal since $\omega_{|\mathfrak{I}\oplus \mathfrak{I}}$ is diagonal in view of Proposition \ref{ch}.
\end{proof}

\section{A generalization to semidirect product of Lie algebras} 
From the viewpoint of Lie groups, the algebraic structure of  Oeljeklaus-Toma manifolds is quite rigid and some of the 
results in the previous sections can be generalized to semidirect product of Lie algebras. 

\medskip 
In this section we consider a Lie algebra $\g$  which is a semidirect product  of Lie algebras
$$
\mathfrak g=\mathfrak h \ltimes_{\lambda} \mathfrak I\,,
$$
where $\lambda\colon \mathfrak h\to {\rm Der}(\mathfrak I)$ is a representation. We further assume that $\g$ has a complex structure of the form 
$$
J=J_{\mathfrak h}\oplus J_{\mathfrak I}
$$
where  $J_{\mathfrak h}$ and  $J_{\mathfrak I}$ are complex structures on $\mathfrak h$ and $\mathfrak I$, respectively. 

\medskip 
The following assumptions are all satisfied in the case of an Oeljeklaus-Toma manifold: 
\begin{enumerate}
\item[i.] $\mathfrak h$ has $(1,0)$-frame such that $\{Z_1,\dots,Z_r\}$ such that $[Z_k,\bar Z_k]=\,-\frac{\sqrt{-1}}{2}(Z_k+\bar Z_k)$,  for all $k=1,\dots ,r$ and the other brackets vanish;

\vspace{0.2cm} 
\item[ii.] $\mathfrak I$ is a $2s$-dimensional abelian Lie algebra and $J_{\mathfrak I}$ is a complex structure on $\mathfrak I$; 

\vspace{0.2cm} 
\item[iii.] $\lambda(\mathfrak h^{1,0})\subseteq {\rm End}(\mathfrak I)^{1,0}$;

\vspace{0.2cm} 
\item[iv.] $\mathfrak I$ has a $(1,0)$-frame $\{W_1,\dots W_s\}$ such that $\lambda(Z)\cdot\bar W_r=\lambda_r(Z)\bar W_r$,  for every $r=1,\dots,s$, where $\lambda_r\in \Lambda^{1,0}(\mathfrak{h})$;

\vspace{0.2cm} 
\item[v.] $\sum_{a=1}^s\Im\mathfrak m(\lambda_a(Z_i))$ is constant on $i$. 

\vspace{0.2cm} 
\item[vi.]  $\mathfrak I$ has a $(1,0)$-frame $\{W_1,\dots W_s\}$ such that $\lambda(Z)\cdot W_r=\lambda'_r(Z) W_r$,  for every $r=1,\dots,s$, where $\lambda_r'\in \Lambda^{1,0}(\mathfrak{h})$ and $\sum_{a=1}^s\Im\mathfrak m(\lambda_a'(Z_i))$ is constant on $i$. 
\end{enumerate} 

Note that condition i. is equivalent to require that 
$\mathfrak h=\underbrace{\mathfrak{f}\oplus \dots \oplus  \mathfrak{f}}_{\mbox{$r$-times}}$ equipped with the complex structure $J_\mathfrak{h}=\underbrace{J_{\mathfrak f}\oplus \dots \oplus J_{\mathfrak f}}_{\mbox{$r$-times}}$, while in condition iv. the existence of $\{W_r\}$ and $\lambda_r$ is equivalent to require that 
$$
\lambda(Z)\circ \lambda(Z')=\lambda(Z') \circ \lambda(Z)\,, 
$$ 
for every $Z,Z'\in \mathfrak h^{1,0}$. 

\medskip 

The computations in Section \ref{Sec4} can be used to study solutions to the flow
\begin{equation}\label{fpcf}
\partial_t\omega_t=-\rho_B^{1,1}(\omega_t)
\end{equation}
in semidirect products of Lie algebras (this flow coincides to the pluriclosed flow only when the initial metric is pluriclosed). We have the following 

\begin{prop}\label{ginocchio}
Let $\mathfrak g=\mathfrak h \ltimes_{\lambda} \mathfrak I$ be a semidirect product of Lie algebras equipped with a
splitting complex structure $J=J_{\mathfrak h}\oplus J_{\mathfrak I}$ and let $\omega$ be a Hermitian metric on $\g$ making $\mathfrak h$ and $\mathfrak I$ orthogonal. Then the Bismut Ricci-form of $\omega$ satisfies $\rho^{1,1}
_{B|\mathfrak h\oplus \mathfrak I}=\rho^{1,1}
_{B|\mathfrak I\oplus \mathfrak I}=0.$

\smallskip 
If i-iv hold and $\omega_{|\mathfrak h\oplus \mathfrak h}$ is diagonal with respect to the frame  $\{Z_i\}$ then the $(1,1)$-component of the Bismut-Ricci form of $\omega$ does not depend on $\omega$ and the solution to the flow \eqref{fpcf} starting from $\omega$ takes the following expression
$$
\omega_t=\omega-t\rho^{1,1}_B(\omega)\,.
$$

\smallskip 
If ${\rm i-iv}$  and ${\rm vi}$ hold and $\omega_{|\mathfrak h\oplus \mathfrak h}$ is a multiple of the canonical metric with respect to the frame  $\{Z_i\}$, then $\omega$ is a soliton for  flow \eqref{fpcf} with cosmological constant $c=\frac{1}{2}+\sum_{a=1}^s\Im\mathfrak m(\lambda_a'(Z_i))$.  
\end{prop}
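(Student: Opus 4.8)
The plan is to reduce the whole statement to the restriction of $\rho_B^{1,1}$ to $\mathfrak h\oplus\mathfrak h$, compute that restriction explicitly, and then produce by hand a derivation realizing the soliton equation. By the first part of the Proposition (valid for any splitting complex structure and any metric making $\mathfrak h$ and $\mathfrak I$ orthogonal) one has $\rho^{1,1}_{B|\mathfrak h\oplus\mathfrak I}=\rho^{1,1}_{B|\mathfrak I\oplus\mathfrak I}=0$, so the only data to extract are the numbers $\rho_B(Z_i,\bar Z_j)$. From i, $[Z_i,\bar Z_j]=0$ for $i\ne j$, hence $\rho_B(Z_i,\bar Z_j)=0$ and the form is diagonal in the $Z$-frame.

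The core of the argument is the evaluation of $\rho_B(Z_i,\bar Z_i)$ through the algebraic formula \eqref{rhoB}, inserting $[Z_i,\bar Z_i]=-\frac{\sqrt{-1}}{2}(Z_i+\bar Z_i)$. The three terms of \eqref{rhoB} split according to whether the summation frame vector lies in $\mathfrak h$ or $\mathfrak I$. By i one has $[Z_i,Z_a]=0$, while vi gives $[Z_i,W_a]=\lambda'_a(Z_i)W_a$, so that each $W_a$ is an eigenvector of $\mathrm{ad}_{Z_i}$; consequently the first two terms collapse to traces over $\mathfrak I$ which, precisely because $W_a$ is an eigenvector, are insensitive to the (possibly non-diagonal) Hermitian metric on $\mathfrak I$ and reduce to $\mp\tfrac12\sum_a\lambda'_a(Z_i)$ and its conjugate, contributing $-\sqrt{-1}\sum_a\Im\mathfrak m\,\lambda'_a(Z_i)$. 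The third ($J$-)term only sees the $\mathfrak h$-block, because $\mathfrak I$ is abelian and the metric is block-diagonal, and using $g^{i\bar i}g_{i\bar i}=1$ it yields exactly $-\frac{\sqrt{-1}}{2}$. Summing, $\rho_B(Z_i,\bar Z_i)=-\sqrt{-1}\left(\frac12+\sum_a\Im\mathfrak m\,\lambda'_a(Z_i)\right)$; the constancy clause in vi makes this value independent of $i$, and the hypothesis that $\omega_{|\mathfrak h\oplus\mathfrak h}$ is a multiple of the canonical metric makes $\omega(Z_i,\bar Z_i)=\sqrt{-1}A$ independent of $i$ as well, so that $\rho^{1,1}_{B|\mathfrak h\oplus\mathfrak h}=-\frac{c}{A}\,\omega_{|\mathfrak h\oplus\mathfrak h}$ with $c=\frac12+\sum_a\Im\mathfrak m\,\lambda'_a(Z_i)$.

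To finish I would exhibit the derivation. Let $D$ be the endomorphism of $\g$ vanishing on $\mathfrak h$ and acting as a real scalar on $\mathfrak I$; since $\mathfrak I$ is an abelian ideal with $[\mathfrak h,\mathfrak I]\subseteq\mathfrak I$, this is a derivation of $\g$, and it commutes with $J=J_{\mathfrak h}\oplus J_{\mathfrak I}$ because it is scalar on each $J$-invariant factor. Choosing the scalar to be $\frac{c}{A}$, I verify the soliton equation $\rho^{1,1}_B(\cdot,\cdot)=-\frac{c}{A}\,\omega(\cdot,\cdot)+\frac12\big(\omega(D\cdot,\cdot)+\omega(\cdot,D\cdot)\big)$ block by block: on $\mathfrak h\oplus\mathfrak h$ the derivation term vanishes and both sides equal $-\frac{c}{A}\,\omega$; on $\mathfrak I\oplus\mathfrak I$ the left side is $0$ while the right side is $-\frac{c}{A}\omega+\frac{c}{A}\omega=0$; the mixed block is zero on both sides by the first part. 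This exhibits $\omega$ as an algebraic soliton, and reading off $\rho_B(Z_i,\bar Z_i)=-\sqrt{-1}c$ identifies the cosmological constant as $c=\frac12+\sum_a\Im\mathfrak m\,\lambda'_a(Z_i)$.

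The main obstacle is the explicit evaluation of $\rho_B(Z_i,\bar Z_i)$: one must organize the contractions in \eqref{rhoB} for an arbitrary, not necessarily diagonal, Hermitian metric on $\mathfrak I$, and the delicate point is to see that the eigenvector relation $[Z_i,W_a]=\lambda'_a(Z_i)W_a$ supplied by vi forces the two trace terms to be metric-independent. This is exactly what decouples the answer from the $\mathfrak I$-block of $\omega$ and lets vi deliver the $i$-independence; everything else runs in parallel with the diagonal case already treated in Proposition \ref{prop4.3}, of which this is the structural abstraction.
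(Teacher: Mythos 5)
Your evaluation of $\rho_B(Z_i,\bar Z_i)$ via \eqref{rhoB} is correct and follows exactly the route the paper intends: the paper gives no written proof of this Proposition, deferring to the computations of Section \ref{Sec4}, and your value $-\sqrt{-1}\left(\frac12+\sum_a\Im\mathfrak m\,\lambda'_a(Z_i)\right)$ together with the derivation $D$ vanishing on $\mathfrak h$ and acting as a scalar on $\mathfrak I$ is precisely the mechanism of Corollary \ref{Cor 5.4} (there with the block matrix ${\rm diag}(0,{\rm I}_{\mathfrak I})$). However, your attempt has two genuine gaps. First, you invoke the opening assertion $\rho^{1,1}_{B|\mathfrak h\oplus \mathfrak I}=\rho^{1,1}_{B|\mathfrak I\oplus \mathfrak I}=0$ as ``the first part of the Proposition'' --- but that assertion is itself part of the statement to be proved, so your argument is circular at this point. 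Worse, your parenthetical claim that it holds for \emph{any} splitting complex structure and any block-orthogonal metric is false: taking $\mathfrak h=0$ it would say that every Hermitian Lie algebra has $\rho^{1,1}_B=0$. The vanishing genuinely uses that $\mathfrak I$ is an abelian ideal (assumption ii, tacitly in force throughout the section): one must check from \eqref{rhoB} that when $[X,Y]\in\mathfrak I_{\CC}$ the $\mathfrak h$-frame contractions die because the inverse metric is block-diagonal and $\omega(\mathfrak I_{\CC},\mathfrak h_{\CC})=0$, while the $\mathfrak I$-frame contractions die because $[\mathfrak I,\mathfrak I]=0$ kills both $[[X,Y]^{1,0},W_c]$ and $J[W_c,\bar W_d]$. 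This bookkeeping must be supplied, not cited.

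Second, you never address the middle assertion: that under i--iv alone, with $\omega_{|\mathfrak h\oplus\mathfrak h}$ diagonal, $\rho_B^{1,1}$ is independent of $\omega$ and $\omega_t=\omega-t\rho^{1,1}_B(\omega)$. Your eigenvector mechanism is not available there: vi is not assumed in that part, and iv only makes the $\bar W_a$ eigenvectors of $\lambda(Z)$, so $[Z_i,W_c]$ may be given by a genuinely non-diagonal matrix. The right observation --- which also streamlines your third part --- is that the two $\mathfrak I$-contributions in \eqref{rhoB} are $\pm\frac12$ times ${\rm tr}\bigl(\lambda(Z_i)_{|\mathfrak I^{1,0}}\bigr)$ and its conjugate, hence metric-independent because they are traces, with or without an eigenbasis; the eigenvector relation from vi is sufficient but beside the point. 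One must then also note that $\rho^{1,1}_B(\omega)$ is again diagonal on $\mathfrak h\oplus\mathfrak h$ and vanishes on the other blocks, so the flow \eqref{fpcf} preserves the class of metrics considered, its right-hand side is constant in $t$, and the ODE integrates to the stated affine expression. With these two pieces added, your computation of $\rho_B(Z_i,\bar Z_i)$, the $i$-independence from vi, and your block-by-block verification of the soliton equation complete the proof and coincide with the paper's intended argument.
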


The previous Proposition does not cover the case when properties i-iv are satisfied and the restriction to $\mathfrak h\oplus \mathfrak h$ of the initial Hermitian inner product 
$$
\omega=\sqrt{-1}\sum _{a,b=1}^r g_{a\bar b}\omega^a\wedge 	\bar \omega^b+\sqrt{-1}\sum _{a,b=1}^s
g_{r+a\overline{r+ b}}\gamma^a\wedge 	\bar \gamma^b
$$ 
is not  diagonal with respect to $\{Z_i\}$. In this case flow \eqref{fpcf} evolves only the components
$g_{i\bar i}$ of $\omega$ along $\omega^i\wedge \bar \omega^i$ via the ODE 
 
$$ 
\partial_{t}g_{i\bar i}=\frac14 \sum_{a=1}^r g^{\bar a a}\Re	\mathfrak{e}\,g_{i\bar a}  
-\frac12 \sum_{c,d=1}^s g^{\overline{r+d} r+ c} \left\lbrace   \omega([Z_i,W_c],\bar W_d) +
\omega([\bar Z_i,\bar W_c],W_d)  \right\rbrace 
$$
where $g_{i\bar i}$ depends on $t$. Note that the quantities $-\frac12 \sum_{c,d=1}^s g^{\overline{r+d} r+ c} \left\lbrace   \omega([Z_i,W_c],\bar W_d) +
\omega([\bar Z_i,\bar W_c],W_d) \right\rbrace$ appearing in the evolution of $g_{i\bar i}$  are independent on $t$.   
 
 The same computations as in Section \ref{Sec3} imply the following 
\begin{prop}
Let $\mathfrak g=\mathfrak h \ltimes_{\lambda} \mathfrak I$ be a semidirect product of Lie algebras equipped with a
splitting complex structure $J=J_{\mathfrak h}\oplus J_{\mathfrak I}$. Assume that properties ${\rm i, ii, iii}$ are satisfied and let $\omega$ be a left-invariant Hermitian metric on $\g$. Then 
$$
\rho_{C | \mathfrak I 	\oplus \mathfrak I }=\rho_{C|\mathfrak h \oplus \mathfrak I}=0\,, 
$$
while $ \rho_{C |\mathfrak h\oplus\mathfrak h}$ is diagonal with respect to $\{Z_1,\ldots, Z_r\}$. 

\smallskip 
If further also ${\rm iv}.$ holds, then
$$
\rho_C(Z_i,\bar Z_i)=-\sqrt{-1}\left(\frac12-\sum_{a=1}^s\Im\mathfrak m(\lambda_{a}(Z_i))\right), \quad \mbox{for all }  i=1,\ldots, r\,.
$$

\smallskip
If, in addition, v. holds, then $\omega$ is a soliton for the Chern-Ricci flow with cosmological constant $c=\frac{1}{2}-\sum_{a=1}^s\Im\mathfrak m(\lambda_a(Z_i))$ if and only if $\omega_{\mathfrak{h}\oplus \mathfrak h}$ is a multiple of the canonical metric on $\mathfrak h $ with respect to the frame $\{Z_i\}$ and $\omega_{\mathfrak h\oplus\mathfrak J }=0$. 
\end{prop}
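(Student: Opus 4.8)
The plan is to mimic the proof of Proposition~\ref{main1}, checking that it used nothing beyond the bracket relations i--v. The starting point is Lauret's Theorem~\ref{Jorge}, which asserts that $\rho_C$ does not depend on the left-invariant Hermitian metric; hence all the values $\rho_C(Z_i,\bar Z_j)$, $\rho_C(Z_i,\bar W_j)$, $\rho_C(W_i,\bar W_j)$ may be computed from formula~\eqref{rhoC} using the canonical metric that turns $\{Z_1,\dots,Z_r,W_1,\dots,W_s\}$ into a unitary frame, even though the $\omega$ in the statement need not make $\mathfrak h$ and $\mathfrak I$ orthogonal. Since $\rho_C$ has type $(1,1)$, only these mixed evaluations matter.

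For the first assertion I would read everything off the structure constants. Property ii gives $[W_i,\bar W_j]=0$, so $\rho_C(W_i,\bar W_j)=0$ and $\rho_{C|\mathfrak I\oplus\mathfrak I}=0$. Property iii gives $[Z_i,\bar W_j]=\lambda(Z_i)\bar W_j\in\mathfrak I^{0,1}$, so the only surviving terms of~\eqref{rhoC} for $\rho_C(Z_i,\bar W_j)$ involve an inner bracket lying in $\mathfrak I$ paired against a vector of $\mathfrak h$; these vanish in the canonical frame, whence $\rho_{C|\mathfrak h\oplus\mathfrak I}=0$. Property i gives $[Z_i,\bar Z_j]=0$ for $i\neq j$, so $\rho_{C|\mathfrak h\oplus\mathfrak h}$ is diagonal. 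For the second assertion I would insert $[Z_i,\bar Z_i]=-\tfrac{\sqrt{-1}}2(Z_i+\bar Z_i)$ together with property iv, written as $[Z_i,\bar W_a]=\lambda_a(Z_i)\bar W_a$ and its conjugate $[\bar Z_i,W_a]=\overline{\lambda_a(Z_i)}W_a$, into~\eqref{rhoC}. The $\mathfrak h$-terms contribute $\tfrac12$ and the $\mathfrak I$-terms contribute $\tfrac12\sum_a(\overline{\lambda_a(Z_i)}-\lambda_a(Z_i))=-\sqrt{-1}\sum_a\Im\mathfrak m(\lambda_a(Z_i))$; collecting signs gives $\rho_C(Z_i,\bar Z_i)=-\sqrt{-1}\bigl(\tfrac12-\sum_{a=1}^s\Im\mathfrak m(\lambda_a(Z_i))\bigr)$.

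For the soliton statement I would set $c:=\tfrac12-\sum_a\Im\mathfrak m(\lambda_a(Z_i))$, which property v makes independent of $i$, so that $\rho_C$ is $-\sqrt{-1}c$ times the canonical form on $\mathfrak h$ and vanishes on $\mathfrak I$ and on the mixed part. As in Proposition~\ref{main1} I would then invoke Theorem~\ref{Jorge}: writing $P$ for the endomorphism attached to $\rho_C$ through $\omega$, the soliton condition reads $P-cI\in{\rm Der}(\g)$. The needed input is the analogue of the derivation computation in Corollary~\ref{Cor 5.4}, namely that every derivation of $\g$ commuting with $J$ annihilates $\mathfrak h$; this follows from i--iv by matching coefficients in $D[Z_i,\bar Z_i]=[DZ_i,\bar Z_i]+[Z_i,D\bar Z_i]$, the eigenvalues $\lambda_a(Z_i)$ avoiding the exceptional value $-\tfrac{\sqrt{-1}}2$. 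Granting this, I would split $g^{-1}$ into $\mathfrak h$- and $\mathfrak I$-blocks; since $P=g^{-1}\rho_C$ is supported on the $\mathfrak h$-column, the requirement $(P-cI)|_{\mathfrak h}=0$ forces the $\mathfrak I$-to-$\mathfrak h$ block of $g^{-1}$ to vanish, i.e. $\omega_{\mathfrak h\oplus\mathfrak I}=0$, and forces $g^{-1}|_{\mathfrak h}$ to be scalar, i.e. $\omega_{\mathfrak h\oplus\mathfrak h}$ a multiple of the canonical metric. Conversely, for such an $\omega$ one checks directly that $P-cI$ is a scalar multiple of the projection onto the abelian ideal $\mathfrak I$ along $\mathfrak h$, which is a $J$-commuting derivation since $[\mathfrak h,\mathfrak I]\subseteq\mathfrak I$ and $\mathfrak I$ is abelian; hence $\omega$ is an algebraic soliton with the stated cosmological constant.

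The hard part will be the penultimate step: confirming that the derivation analysis of Corollary~\ref{Cor 5.4}, which in the Oeljeklaus-Toma case exploited the explicit arithmetic of the $\lambda_{ki}$, continues to go through from the abstract hypotheses i--v alone, so that $J$-commuting derivations are forced to annihilate $\mathfrak h$ (and to preserve the eigenlines of $\mathfrak I$). Once that structural fact is in hand, every remaining step is the same bracket bookkeeping as in Section~\ref{Sec3}, rendered routine by the metric-independence of $\rho_C$.
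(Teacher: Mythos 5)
Your route is the paper's own: the paper proves this proposition only by the remark that ``the same computations as in Section \ref{Sec3}'' apply, and that is precisely what you reconstruct. Lauret's metric-independence of $\rho_C$ reduces everything to evaluating \eqref{rhoC} in the canonical frame, where property ii kills $\rho_{C|\mathfrak I\oplus\mathfrak I}$, property iii puts $[Z_i,\bar W_j]$ and its iterated brackets in $\mathfrak I^{0,1}$ so the mixed terms pair to zero, property i gives diagonality, and your sign bookkeeping for $\rho_C(Z_i,\bar Z_i)$ is correct. The forward soliton implication is also sound: for $\omega_{|\mathfrak h\oplus\mathfrak h}$ scalar and $\omega_{|\mathfrak h\oplus\mathfrak I}=0$, the operator $P-cI$ is a multiple of the projection onto $\mathfrak I$ along $\mathfrak h$, which is a $J$-commuting derivation since $\mathfrak h$ is a subalgebra and $\mathfrak I$ an ideal.

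However, the step you flag as ``the hard part'' is a genuine gap, and in fact it cannot be closed from i--v alone. Matching coefficients in $D[Z_i,\bar Z_i]=[DZ_i,\bar Z_i]+[Z_i,D\bar Z_i]$ always kills the $\mathfrak h$-component of $DZ_i$, but the $\mathfrak I$-component survives with coefficient $m^i_a\bigl(\overline{\lambda_a(Z_i)}-\tfrac{\sqrt{-1}}{2}\bigr)$, and nothing in i--v forbids the exceptional value $\lambda_a(Z_i)=-\tfrac{\sqrt{-1}}{2}$. When it occurs, the structural lemma is false: for $r=s=1$ with $[Z,\bar W]=-\tfrac{\sqrt{-1}}{2}\bar W$, the $J$-commuting map $DZ=mW$, $D\bar Z=\bar m\bar W$, $DW=D\bar W=0$ is a derivation, and one then checks directly that $P-cI$ is a derivation for \emph{every} left-invariant Hermitian metric, so metrics with $\omega_{|\mathfrak h\oplus\mathfrak I}\neq 0$ become algebraic solitons and the ``only if'' direction of the statement fails. (What does survive unconditionally is $D(\mathfrak h^{1,0})\subseteq\mathfrak I^{1,0}$ for every $J$-commuting derivation, hence the conclusion that $\omega_{|\mathfrak h\oplus\mathfrak h}$ is a multiple of the canonical metric; only the orthogonality claim $\omega_{|\mathfrak h\oplus\mathfrak I}=0$ is at risk.) So you must either add the nondegeneracy hypothesis $\lambda_a(Z_i)\neq-\tfrac{\sqrt{-1}}{2}$ for all $a,i$ --- automatic in the pluriclosed Oeljeklaus--Toma case, where \eqref{lambdas} gives $\Im\mathfrak m\,\lambda_a(Z_i)\in\{0,\tfrac14\}$ --- or prove the converse by a different argument. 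To be fair, this gap is inherited from the paper itself, which asserts the proposition with no more justification than the pointer to Section \ref{Sec3}; but as a proof your proposal is incomplete at exactly the point you identified, and the missing step is not merely hard but false in the stated generality.
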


\end{document}